\documentclass[12pt]{amsart}

\usepackage{amscd,amssymb,amsopn,amsmath,amsthm,mathrsfs,graphics,amsfonts,enumerate,verbatim,calc
}
\usepackage{amscd}

\usepackage{color}

\usepackage[OT2,OT1]{fontenc}
\newcommand\cyr{%
\renewcommand\rmdefault{wncyr}%
\renewcommand\sfdefault{wncyss}%
\renewcommand\encodingdefault{OT2}%
\normalfont
\selectfont}
\DeclareTextFontCommand{\textcyr}{\cyr}

\usepackage{amssymb,amsmath}
\usepackage{caption}

\DeclareFontFamily{OT1}{rsfs}{}
\DeclareFontShape{OT1}{rsfs}{n}{it}{<-> rsfs10}{}
\DeclareMathAlphabet{\mathscr}{OT1}{rsfs}{n}{it}

\numberwithin{equation}{section}
\newtheorem{theorem}{Theorem}[section]
\newtheorem{lemma}[theorem]{Lemma}

\newtheorem{corollary}[theorem]{Corollary}

\theoremstyle{definition}
\newtheorem{definition}[theorem]{Definition}
\newtheorem{remark}[theorem]{Remark}
\theoremstyle{remark}

\renewcommand{\ker}{\operatorname{Ker}}

\newcommand{\Hom}{\operatorname{Hom}}

\newcommand{\depth}{\operatorname{depth}}

\newcommand{\fm}{\frak{m}}
\newcommand{\fp}{\frak{p}}
\newcommand{\fq}{\frak{q}}

\begin{document}
\title[Frobenius test exponents]{A sharp bound for the Frobenius test exponents in generalized Cohen-Macaulay local rings}

\author[Duong Thi Huong]{Duong Thi Huong}
\address{Department of Mathematics, Thang Long University, Hanoi, Vietnam}
\email{huongdt@thanglong.edu.vn}

\author[Pham Hung Quy]{Pham Hung Quy}
\address{Department of Mathematics, FPT University, Hanoi, Vietnam}
\email{quyph@fe.edu.vn}

\thanks{2020 {\em Mathematics Subject Classification\/}: 13A35, 13D45.}

\keywords{The Frobenius test exponent, The Hartshorne-Speiser-Lyubeznik number, Local cohomology, Filter regular sequence.}

\maketitle

\begin{abstract} Let $(R,\fm)$ be a generalized Cohen-Macaulay local ring of prime characteristic $p$. In this paper we give a sharp bound for the Frobenius test exponent of parameter ideals. Namely, we prove that
$$\mathrm{Fte}(R) \le \lceil  \log_p(2n_0)\rceil + \mathrm{HSL}(R),$$
where $n_0$ is the integer such that $\frak m^{n_0} \, H^i_{\frak m}(R) = 0$ for all $i < \mathrm{dim}(R)$, and $\lceil x\rceil$ is the smallest integer that is greater than or equal to $x$.      
\end{abstract}

\section{Introduction}
Let $(R,\fm)$ be a Noetherian local ring of prime characteristic $p$, $M$ be a finitely generated $R$-module of dimension $d>0$, $I$ an ideal of $R$. Let $F: R \to R; x \mapsto x^p$ be Frobenius endomorphism of the ring. The nilpotent part under the Frobenius endomorphism modulo $I$ is called the {\it Frobenius closure} of $I$, defined by $I^F = \{x \mid x^{p^e} \in I^{[p^e]} \text{ for some } e \ge 0\}$, where $I^{[p^e]} = (x^{p^e} \mid x \in I)$ is the {\it Frobenius closure} of $I$. By the Noetherianness of $R$ there is an integer $e$, depending on $I$, such that $(I^F)^{[p^e]} = I^{[p^e]}$. The {\it Frobenius test exponent} of $I$ is defined by $\mathrm{Fte}(I) = \min \{e \mid (I^F)^{[p^e]} = I^{[p^e]}\}$. Under mild conditions, $R$ is $F$-pure if and only if $\mathrm{Fte}(I) = 0$ for all $I$ by Hochster \cite{H77}. In general, we cannot expect a uniform bound for all $\mathrm{Fte}(I)$ by Brenner \cite{B06}. Restricting to the class of parameter ideals, for any local ring $(R, \fm)$, we define the {\it Frobenius test exponent for parameter ideals} $\mathrm{Fte}(R)$ as follows
$$\mathrm{Fte}(R) = \min \{e \mid (\fq^F)^{[p^e]} = \fq^{[p^e]} \, \text{for all parameter ideals }\, \fq\},$$
 and $\mathrm{Fte}(R) = \infty$ if we have no such integer. Katzman and Sharp asked whether $\mathrm{Fte}(R) < \infty$ for any (equidimensional) local ring (i.e. there exists an integer $C$ such that  $\mathrm{Fte}(R) \leq C$).

 There are several classes of rings for which $\mathrm{Fte}(R)$ is finite, such as Cohen-Macaulay rings (cf. \cite{KS06}), generalized Cohen-Macaulay rings (cf. \cite{HKSY06}), $F$-nilpotent rings (i.e. $H^i_{\fm}(R) = 0^F_{H^i_{\fm}(R)}$ for all $i < \mathrm{dim}(R)$) (cf. \cite{Q19}) and generalized weakly $F$-nilpotent rings, i.e. $H^i_{\fm}(R) / 0^F_{H^i_{\fm}(R)}$ has finite length for all $i < \mathrm{dim}(R)$ in \cite{M19}. In general, for any local ring $R$, we have $\mathrm{Fte}(R) \geq \mathrm{HSL}(R)$ in \cite{HQ19}. 

In another context, we consider the question of whether there exists a uniform bound for the Frobenius test exponent of certain classes of ideals. When $R$ has positive finiteness dimension $t$, we proved that \cite[Theorem 3.6]{HQ22} there exists an integer $C$ such that for any filter regular sequence $x_1,\ldots,x_s$ of length at most $t$ we have $\mathrm{Fte}(x_1, \ldots, x_s) \leq C$. The number $C$ is related to the invariant $\mathrm{HSL}(R)$, which is always finite and is defined by $H^i_{\mathfrak m}(R)$ for $i=1,\ldots ,\mathrm{dim}(R)$ (see the next section for details). 

It should be noted that the Cohen-Macaulay rings have $\mathrm{Fte}(R)=\mathrm{HSL}(R)$, while in generalized Cohen-Macaulay rings the second author \cite{Q19} proved that
$$\mathrm{Fte}(R) \le \lceil  \log_p(2n_0)\rceil \; +\;\sum_{i = 0}^{\mathrm{dim}(R)} \binom{\mathrm{dim}(R)}{i} \mathrm{HSL} (H^i_{\fm}(R)),$$
where $n_0$ is the integer such that $\frak m^{n_0} \, H^i_{\frak m}(R) = 0$ for all $i < \mathrm{dim}(R)$, and $\lceil x\rceil$ is the smallest integer that is greater than or equal to $x$.
In \cite[Theorem 1.1]{Ma15}, it was proved that if $R$ is a generalized Cohen-Macaulay ring then we have $\mathrm{Fte}(R)=0$ if and only if $\mathrm{HSL}(R)=0$. In this case, $R$ becomes a Buchsbaum ring. 
Let $R$ be an excellent generalized Cohen-Macaulay ring that is $F$-injective on the punctured spectrum. Let $n_0$ be a positive integer such that $\fm^{n_0} H^i_{\fm}(R) = 0$ for all $i < \mathrm{dim}(R)$, and $\fm^{n_0} 0^F_{H^{\mathrm{dim}(R)}_{\fm}(R)} = 0$. We have
$$\mathrm{Fte}(R) \le \lceil  \log_p(2n_0)\rceil + \mathrm{HSL}(R),$$
where $\lceil x\rceil$ is the smallest integer that is greater than or equal to $x$ \cite[Corollary 3.6]{HQ23}.
The condition that $R$ is $F$-injective in the punctured spectrum guarantees that $0^F_{H_{\fm}^ {\mathrm{dim}(R)}(R)}$ has finite length. We wonder whether this condition can be omitted in order to consider the problem for generalized Cohen--Macaulay rings \cite[Question 3.7]{HQ23}.

The aim of this paper is to provide a better upper bound for the Frobenius test exponent of all ideals generated by filter sequences of length at most the finiteness dimension of the ring. As a consequence, we obtain a sharp bound for $\mathrm{Fte}(R)$ when $R$ is generalized Cohen-Macaulay.

The main results of this paper are the following
\begin{theorem}
 Let $(R,\fm)$ be a local ring of prime characteristic $p$ and of positive finiteness dimension $t>0$ . Let $n_0$ be a positive integer such that $\fm^{n_0} H^i_{\fm}(R) = 0$ for all $i < t$. Then for every filter regular sequence $x_1,\ldots,x_t$ of $R$ we have
$$\mathrm{Fte}(x_1,\ldots,x_t) \le \lceil  \log_p(2n_0)\rceil + \mathrm{HSL}(R),$$
where $\lceil x\rceil$ is the smallest integer that is greater than or equal to $x$. 
\end{theorem}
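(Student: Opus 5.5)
We argue by induction along the filter regular sequence, treating the Frobenius-nilpotent elements modulo $\fq=(x_1,\ldots,x_t)$ through local cohomology. Write $\fq_j=(x_1,\ldots,x_j)$. Since $x_1,\ldots,x_t$ is filter regular and $t$ is at most the finiteness dimension, the modules $H^i_{\fm}(R/\fq_j)$ have finite length for $i<t-j$. They are also killed by a suitable power of $\fm$, obtained by chasing the sequences $0\to (0:x_{j+1})\to R/\fq_j\to R/\fq_j\to R/\fq_{j+1}\to 0$, with $(0:_{R/\fq_j}x_{j+1})$ of finite length. The basic device is the standard identification
\[
\fq^F/\fq \hookrightarrow 0^F_{H^t_{\fq}(R)}
\]
through the Čech/limit description $H^t_{\fq}(R)=\varinjlim R/\fq^{[n]}$ (maps given by multiplication by $(x_1\cdots x_t)^{n'-n}$). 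We then use $\mathrm{HSL}$ to show that $F^e$ kills the nilpotent part once $e\ge \mathrm{HSL}(R)$. The one real issue is that the maps $R/\fq\to H^t_{\fq}(R)$ are not injective, because of the failure of the Cohen--Macaulay property. Correcting for this is what costs $\lceil\log_p(2n_0)\rceil$.

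The steps are as follows.

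\begin{enumerate}
\item Take $a\in\fq^F$ and set $e_0=\lceil\log_p(2n_0)\rceil$, so that $q_0=p^{e_0}\ge 2n_0$. Replace $\fq$ by $\fq^{[q_0]}$, and use the known colon-capturing-type estimates for filter regular sequences in rings with $\fm^{n_0}H^i_{\fm}(R)=0$ for $i<t$. Concretely, we will prove
\[
\bigl(\fq^{[q]}:_R (x_1\cdots x_t)^{n}\bigr)\cdot \fm^{\,?}\subseteq \fq^{[q]}+\ldots,
\]
in the form used in \cite{Q19} and \cite{HQ22}: the kernel of $R/\fq^{[q]}\to H^t_{\fq}(R)$ is contained in $(\fq^{[q]}:\fm^{n_0})$ modulo terms that become zero once $q\ge 2n_0$. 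The constant $2n_0$ enters because two consecutive steps in the limit each contribute an $\fm^{n_0}$, while the generators $x_i^{q}$ already lie in $\fm^{q}$.
\item Show that if $b=a^{q_0}$, then the image of $b$ in $H^t_{\fq}(R)$ lies in $0^F$. By the definition of $\mathrm{HSL}(R)$, which bounds the Frobenius nilpotency on $H^t_{\fm}$ and the lower local cohomologies as in \cite{HQ22}, this gives $F^{e}(b)=0$ in $H^t_{\fq}(R)$ for $e=\mathrm{HSL}(R)$. The comparison between $H^t_{\fq}(R)$ and $H^i_{\fm}(R)$ is the one used in \cite[Theorem 3.6]{HQ22}, via the spectral sequence or the filter regular sequence short exact sequences.
\item Hence $b^{p^e}(x_1\cdots x_t)^{N}\in \fq^{[q_0p^e(N+1)]}$ for some $N$. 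By Step 1, applied with $q=q_0p^e\ge 2n_0$, we obtain $a^{q_0p^e}\in\fq^{[q_0p^e]}$, that is, $\mathrm{Fte}(\fq)\le e_0+\mathrm{HSL}(R)$.
\end{enumerate}

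The main obstacle is Step 1 together with the interaction in Step 3. We must show that the failure of injectivity of $R/\fq^{[q]}\to H^t_{\fq}(R)$ is absorbed by a single factor $\fm^{n_0}$, without the factor $\binom{t}{i}$ that occurs in \cite{Q19}. We must also show that $q\ge 2n_0$ suffices to kill it, rather than needing a separate power for each cohomological index. Our first attempt will be an induction on $t$ along $R\to R/x_1R$, using that $\fm^{n_0}$ kills $H^i_{\fm}(R/x_1^{n}R)$ for $i<t-1$, after enlarging by the finite-length module $0:x_1^n$. We will check that the bound $2n_0$, and not $tn_0$, propagates, by using $\fm^{n_0}\subseteq$ the annihilator uniformly and the fact that $x_i^{q}\in\fm^{2n_0}$.
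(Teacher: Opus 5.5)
You have the right overall shape (top local cohomology plus a correction, and the reduction to $\fm^{2n_0}$ via $\mathrm{Fte}(\fq)\le\mathrm{Fte}(\fq^{[p^{e_0}]})+e_0$), but there is a genuine gap in Steps 1 and 3.

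\textbf{The main gap (Steps 1 and 3).} The kernel of $R/\fq^{[q]}\to H^t_{\fq}(R)$ is $(\fq^{[q]})^{\mathrm{lim}}/\fq^{[q]}$. So your ``standard identification'' $\fq^F/\fq\hookrightarrow 0^F_{H^t_{\fq}(R)}$ holds only after dividing by $\fq^{\mathrm{lim}}$, and this kernel does \emph{not} ``become zero once $q\ge 2n_0$''.

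For $q\ge 2n_0$, the elements $x_1^q,\ldots,x_t^q$ form a filter regular sequence in $\fm^{2n_0}$, hence a standard one. By Lemma~\ref{lemma qlim}, the kernel then has length $\sum_{i<t}\binom{t}{i}\ell(H^i_{\fm}(R))$ for every such $q$, and Lemma~\ref{qlim splitting} identifies it with $\bigoplus_{i<t}H^i_{\fm}(R)^{\binom{t}{i}}$. The most Step 1 can deliver is $(\fq^{[q]})^{\mathrm{lim}}\subseteq\fq^{[q]}:_R\fm^{n_0}$, and that puts nothing into $\fq^{[q]}$.

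Consequently, in Step 3 the vanishing of $F^e(b)$ in $H^t_{\fq}(R)$ only gives $a^{q_0p^e}\in(\fq^{[q_0p^e]})^{\mathrm{lim}}$, not $a^{q_0p^e}\in\fq^{[q_0p^e]}$. A symptom is that the Frobenius action on $H^i_{\fm}(R)$ for $i<t$ is never actually used in your argument, even though these modules are exactly what the obstruction consists of.

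\textbf{A smaller gap (Step 2).} $\mathrm{HSL}(R)$ controls $H^t_{\fm}(R)\cong H^0_{\fm}(H^t_{\fq}(R))$, not $H^t_{\fq}(R)$. The two coincide when $t=\dim R$. For $t<\dim R$ you need a separate argument that the relevant classes lie in $H^0_{\fm}(H^t_{\fq}(R))$. This is not automatic, and a reference to ``the spectral sequence'' does not supply it.

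\textbf{The missing idea.} The paper handles the limit-closure part separately and Frobenius-equivariantly, via
\[
0\to(\fq^{\mathrm{lim}}\cap\fq^F)/\fq\to\fq^F/\fq\to\fq^F/(\fq^{\mathrm{lim}}\cap\fq^F)\to 0 .
\]
The right-hand term embeds in the Frobenius-nilpotent part of the top local cohomology; this is your Step 2.

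For the left-hand term, when all $x_i\in\fm^{2n_0}$, the paper shows $\fq^{\mathrm{lim}}/\fq\cong\bigoplus_{i<t}H^i_{\fm}(R)^{\binom{t}{i}}$ compatibly with Frobenius. This combines three ingredients:
\begin{enumerate}
\item the splitting of $\Hom_R(R/\fm^{n_0},R/\fq)$ in Theorem~\ref{splitting2};
\item the inclusion $\fq^{\mathrm{lim}}\subseteq\fq:\fm^{n_0}$;
\item the length count above.
\end{enumerate}
Hence $(\fq^{\mathrm{lim}}\cap\fq^F)/\fq\cong\bigoplus_{i<t}(0^F_{H^i_{\fm}(R)})^{\binom{t}{i}}$, which $F^{\mathrm{HSL}(R)}$ annihilates (Lemma~\ref{qlimforring}).

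This is where $2n_0$ really enters, through standardness and the splitting, not through ``two consecutive steps in the limit''. It is also why the binomial coefficients of the earlier bound disappear: the obstruction is a direct sum, killed by a single Frobenius power, rather than a filtration with one step per summand.

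Be aware that killing the top piece with $F^{\mathrm{HSL}(R)}$ only moves $a^{q}$, with $q=p^{\mathrm{HSL}(R)}$, into $(\fq^{[q]})^{\mathrm{lim}}\cap(\fq^{[q]})^F$. The two pieces therefore have to be combined with care if $\mathrm{HSL}(R)$ is not to be paid twice.
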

\begin{corollary}
 Let $(R,\fm)$ be a generalized Cohen-Macaulay local ring of prime characteristic $p$. Let $n_0$ be a positive integer such that $\fm^{n_0} H^i_{\fm}(R) = 0$ for all $i < \mathrm{dim}(R)$. Then 
$$\mathrm{Fte}(R) \le \lceil  \log_p(2n_0)\rceil + \mathrm{HSL}(R),$$
where $\lceil x\rceil$ is the smallest integer that is greater than or equal to $x$.     
\end{corollary}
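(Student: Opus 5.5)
The corollary will be deduced directly from the main theorem stated just above it. We work in a generalized Cohen--Macaulay local ring $(R,\fm)$ of dimension $d$; we may assume $d>0$, since for $d=0$ the only parameter ideal is $(0)$, which is handled trivially (or excluded by convention).

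\textbf{Step 1: the finiteness dimension is $d$.} Recall that the finiteness dimension of $R$ is the least $i$ such that $H^i_{\fm}(R)$ is not finitely generated. Two facts combine here.
\begin{itemize}
\item The generalized Cohen--Macaulay hypothesis means that $H^i_{\fm}(R)$ has finite length for all $i<d$.
\item $H^d_{\fm}(R)\neq 0$ is never finitely generated, since it is Artinian and not of finite length when $d>0$.
\end{itemize}
Hence $t=d>0$. Moreover, the integer $n_0$ of the corollary satisfies $\fm^{n_0}H^i_{\fm}(R)=0$ for all $i<t$, as the theorem requires.

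\textbf{Step 2: parameter systems are filter regular sequences.} Let $\fq=(x_1,\ldots,x_d)$ be a parameter ideal. We want $x_1,\ldots,x_d$ to be a filter regular sequence of length $t$; equivalently, $x_1,\ldots,x_d$ should be a regular sequence on $R_{\fp}$-level in the sense of filter regularity, i.e.\ $x_{j}\notin\fp$ for all $\fp\in \mathrm{Ass}(R/(x_1,\ldots,x_{j-1}))\setminus\{\fm\}$.

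This is the standard fact that in a generalized Cohen--Macaulay ring every system of parameters is a filter regular sequence. One argument is as follows.
\begin{itemize}
\item $R_{\fp}$ is Cohen--Macaulay for every $\fp\neq\fm$, and $R$ is equidimensional with $\dim R/\fp+\operatorname{ht}\fp=d$ for all $\fp\in\mathrm{Supp}\,R\setminus\{\fm\}$.
\item Consequently, for $\fp\neq \fm$ containing $x_1,\ldots,x_j$, the images of $x_1,\ldots,x_j$ are part of a system of parameters of the Cohen--Macaulay ring $R_\fp$, hence form a regular sequence there.
\item This gives exactly $\fp\notin\mathrm{Ass}(R/(x_1,\ldots,x_{j-1}))$ whenever $x_j\in\fp\neq\fm$.
\end{itemize}
Alternatively, we can cite the well-known characterization of generalized Cohen--Macaulay rings via filter regular s.o.p.'s.

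\textbf{Step 3: conclude.} Applying the theorem to each parameter ideal gives
$$\mathrm{Fte}(\fq)\le \lceil\log_p(2n_0)\rceil+\mathrm{HSL}(R)$$
uniformly in $\fq$. By the definition of $\mathrm{Fte}(R)$ as the least $e$ that works for all parameter ideals, and since $(\fq^F)^{[p^{e}]}=\fq^{[p^{e}]}$ persists for larger $e$ by applying Frobenius, we obtain the stated bound on $\mathrm{Fte}(R)$.

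The only nontrivial point is Step 2, the filter regularity of arbitrary parameter systems. The approach above handles it via localization at non-maximal primes, using that generalized Cohen--Macaulay local rings are Cohen--Macaulay on the punctured spectrum and equidimensional. Strictly speaking this requires $R$ to be a quotient of a Cohen--Macaulay ring for the standard equivalence; if that hypothesis is not available, we instead cite the classical result of Cuong--Schenzel--Trung that every s.o.p.\ of a generalized Cohen--Macaulay module is a filter regular sequence (even a "standard" system after powers, but filter regularity suffices).
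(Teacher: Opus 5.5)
Your proposal is correct and follows the paper's approach: the paper gets the corollary by applying Theorem~\ref{mainthm} with $t=\dim(R)$, since generalized Cohen--Macaulay forces $f_{\fm}(R)=\dim(R)$, to an arbitrary system of parameters, which is a filter regular sequence. Your caveat in Step 2 is unnecessary: the equivalence (1)$\Leftrightarrow$(3) in the paper's Remark holds for every Noetherian local ring and gives $\fq_{i-1}:x_i\subseteq \fq_{i-1}:\fm^n$ for every system of parameters, hence filter regularity, without assuming $R$ is a quotient of a Cohen--Macaulay ring.
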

In the next section, we recall the basic notions and relevant materials. We prove the main results in the last section.
\section{Preliminaries}
\subsection{Standard sequences}
 In this subsection, the ring $(R,\fm)$ is not necessarily of prime characteristic. $M$ is a finitely generated $R$-module and $x_1,\ldots, x_s$ is a sequence of elements of $R$.
We recall the notion of a filter regular sequence, introduced by Cuong, Schenzel, and Trung in \cite{CST78}, which generalizes the notion of a regular sequence. The sequence $x_1,\ldots, x_s$ is a {\it filter regular sequence} on $M$ if for every $1\le i\le s$ we have
$$(x_1,\ldots,x_{i-1})M:_Mx_i \subseteq \bigcup_{n\geq 1}(x_1,\ldots,x_{i-1})M:_M{\fm^{n}}.$$
If the sequence $x_1,\ldots, x_s$ is a filter regular sequence on $M$ then the sequence $x_1^{n_1},\ldots,x_s^{n_s}$ is a filter regular sequence on $M$ for all $n_1,\ldots,n_s\geq 1$.
\begin{definition}
Let $(R,\fm)$ be a Noetherian local ring, $M$ a finitely generated $R$-module of dimension $d>0$, and $\fq=(x_1,\ldots, x_d)$ a parameter ideal of $M$. Then
\begin{enumerate}
\item $M$ is called {\it generalized Cohen-Macaulay} if there exists a positive integer $n_0$ such that $\mathfrak m^{n_0}H^i_{\mathfrak m}(M)=0$ for all $i<d$.
\item The parameter ideal $\fq$ is called {\it standard} if for all $i+j<d$ we have
$$\fq H^j_{\fm}(M/(x_1,\ldots,x_i)M)=0.$$ 
\end{enumerate}
\end{definition}
\begin{remark}
Let $(R,\fm)$ be a Noetherian local ring, $M$ a finitely generated $R$-module of dimension $d>0$ and $\fq=(x_1,\ldots,x_d)$ a parameter ideal of $M$. Set $\fq_i=(x_1,\ldots,x_i)$ for $i=1,\ldots,d$. We consider the following assertions
\begin{enumerate}
\item $M$ is  generalized Cohen-Macaulay.
\item $I(M) := \sup \;\{\ell(M/\mathfrak qM)-e(\mathfrak q;M)\} <\infty,$
where $\fq$ runs through all parameter ideals of $M$.
\item There exists a positive integer $n$ such that
$$\mathfrak q_{i-1}M :_M x_i \subseteq \mathfrak q_{i-1}M :_M \mathfrak m^n$$
for every system of parameters of $M$ and $i=1,\ldots,d$.
\item There exists a standard system of parameters $x'_1,\ldots,x'_d$ of M.  In this case, we have $I(M) = I(\fq', M)$ where $\fq' =(x'_1,\ldots,x'_d)$.
\item Every system of parameters $x_1,\ldots,x_t$ of $M$ is a filter regular sequence on $M$. 
\item $M$ is equidimensional and $M_{\fp}$ are Cohen-Macaulay for all $\fp \in \mathrm{Spec}(R)\setminus \{\fm\}$.
\end{enumerate}
Then the first four assertions are equivalent, and they are also equivalent to the last two provided that R is a homomorphic image of a Cohen–Macaulay local ring.  
\end{remark}
\begin{definition}
Let $M$ be a finitely generated module over a local ring $(R,\mathfrak m)$.
The \it{finiteness dimension} $f_{\mathfrak m}(M)$ of $M$ with respect to
$\mathfrak m$ is defined as follows:
\[
f_{\mathfrak m}(M):=
\inf\{\, i \mid H^i_{\mathfrak m}(M)
\text{ is not finitely generated}\,\}
.
\]
For convenience, we adopt the convention that $f_{\mathfrak m}(M) = \infty$ when $\mathrm{dim}(M)=0$ or $\mathrm{dim}(M)=-\infty$.
\end{definition}
\begin{definition}
Let $M$ be a finitely generated module over a local ring $(R,\mathfrak m)$
such that \(t=f_{\mathfrak m}(M)<\infty\), and let
\(x_1,\ldots,x_s\), \(s\le t\), be a filter regular sequence on \(M\).
Then we say that \(x_1,\ldots,x_s\) is \it{standard sequence} of \(M\) if
\[
(x_1,\ldots,x_s)\,
H^i_{\mathfrak m}\!\left(
\frac{M}{(x_1,\ldots,x_j)M}
\right)=0
\]
for all \(i+j<s\).
\end{definition}
\begin{remark}
\begin{enumerate}
    \item Let $M$ be a finitely generated module with $\mathrm{dim}(M)=d>0$. Then $0<f_{\mathfrak m}(M)\leq d$ since $H^d_{\fm}(M)$ is never finitely generated. Moreover, $f_{\mathfrak m}(M)= d$ if and only if $M$ is generalized Cohen-Macaulay.
    \item  Assume that \(f_{\mathfrak m}(M)<\infty\). By Grothendieck's finiteness theorem, we have
\[
f_{\mathfrak m}(M)=
\min\{
\depth(M_{\mathfrak p})+\dim R/\mathfrak p
\mid
\mathfrak p\neq \mathfrak m
\},
\]
provided that \(R\) is a homomorphic image of a regular local ring \cite[Theorem 9.5.2]{BS98}.
\item If \(x_1,\ldots,x_s\) with \(s\le f_{\mathfrak m}(M)\) is a filter regular
sequence on \(M\), then it is a filter regular sequence on \(M\) in any order (cf. \cite[Lemma 2.9]{Q13}).
\end{enumerate}
\end{remark}
We need also splitting results for local cohomology \cite[Corollary 4.11 and Corollary 4.12]{CQ11}.
\begin{theorem}\label{splitting1} Let $(R,\mathfrak m)$ be a Noetherian local ring, $M$ be a finitely generated module $R$-module of dimension $d>0$ and of positive finiteness dimension $t$. Let $n_0$ be a positive integer such that $\fm^{n_0}H^i_{\fm}(M) = 0$ for all $i < t$. Then for any filter regular element $x \in \fm^{2n_0}$ of $M$, we have
$$H^i_{\fm}(M/xM) \cong H^i_{\fm}(M) \oplus H^{i+1}_{\fm}(M)$$
for all $i < t-1$, and 
$$0:_{H^{d-1}_{\fm}(M/xM)} \fm^{n_0} \cong H^{d-1}_{\fm}(M) \oplus 0:_{H^{d}_{\fm}(M)} \fm^{n_0}.$$ Moreover, if a filter regular sequence $x_1, \ldots, x_d$ is contained in $\fm^{2n_0}$, then it is standard.
\end{theorem}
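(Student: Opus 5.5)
The plan is to derive everything from the long exact sequence attached to multiplication by $x$. Since $x$ is filter regular on $M$, the module $0:_M x$ has finite length. Hence $H^i_{\fm}(M/(0:_Mx))\cong H^i_{\fm}(M)$ for $i\ge 1$, while in degree $0$ the two differ only by a finite length module, which I handle by hand. Applying $H^\bullet_{\fm}$ to
$$0\to M/(0:_Mx)\xrightarrow{x} M\to M/xM\to 0$$
gives
$$H^i_{\fm}(M)\xrightarrow{x}H^i_{\fm}(M)\to H^i_{\fm}(M/xM)\to H^{i+1}_{\fm}(M)\xrightarrow{x}H^{i+1}_{\fm}(M).$$
Because $x\in\fm^{2n_0}\subseteq \fm^{n_0}$, multiplication by $x$ is zero on $H^j_{\fm}(M)$ for $j<t$. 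So for $i<t-1$ we get short exact sequences
$$0\to H^i_{\fm}(M)\to H^i_{\fm}(M/xM)\to H^{i+1}_{\fm}(M)\to 0. \qquad (\ast)$$

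The key step, and the one I expect to be the main obstacle, is showing that $(\ast)$ splits. My approach is to compare with the sequences for factors of $x$. I write $x=\sum_j y_jz_j$ with $y_j,z_j\in\fm^{n_0}$, so each $y_j$ kills $H^i_{\fm}(M)$ and each $z_j$ kills $H^{i+1}_{\fm}(M)$. In the monomial case $x=yz$, the map of short exact sequences from $0\to M\xrightarrow{x}M\to M/xM\to0$ to $0\to M\xrightarrow{z}M\to M/zM\to0$ has left vertical map $y$, middle map the identity, and right map the projection. It shows that the extension class $\xi_x\in \mathrm{Ext}^1(H^{i+1}_{\fm}(M),H^i_{\fm}(M))$ of $(\ast)$ is obtained from the analogous class for $z$ by pushing out along multiplication by $y$. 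Since $y$ annihilates $H^i_{\fm}(M)$, this pushout is zero. For a sum $x=\sum y_jz_j$, I would use additivity of these classes, via a Baer sum argument, to reduce to the monomial case. Concretely, I would try to realise $\xi_x$ as $\sum_j y_j\cdot\xi_{z_j}$, where the $R$-action on $\mathrm{Ext}$ through the first argument and through the second argument agree. A fallback is to construct an explicit section $H^{i+1}_{\fm}(M)\to H^i_{\fm}(M/xM)$ using the connecting maps $\delta_{z_j}$ composed with the maps induced by $y_j$, and check via naturality of connecting homomorphisms that it is a right inverse.

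For the top degree I apply $\Hom(R/\fm^{n_0},-)$ to the exact sequence
$$0\to H^{d-1}_{\fm}(M)\to H^{d-1}_{\fm}(M/xM)\to 0:_{H^d_{\fm}(M)}x\to 0.$$
Here the left term is already killed by $\fm^{n_0}$; the argument would need $t=d$, or else uses the hypothesis that $x$ acts as zero on $H^{d-1}_{\fm}(M)$. Since $0:_{H^d_{\fm}(M)}\fm^{n_0}\subseteq 0:x$, I would restrict the sequence to $\fm^{n_0}$-socles and apply the same factorization argument as before to split it. The point is that $y_j$ kills $H^{d-1}_{\fm}(M)$ and $z_j$ kills $0:_{H^d_{\fm}(M)}\fm^{n_0}$, which yields the stated isomorphism.

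Finally, the standardness claim follows by induction on the length of the sequence. By the splitting, $\fm^{n_0}$ annihilates $H^i_{\fm}(M/x_1M)$ for $i<t-1$, and $M/x_1M$ has finiteness dimension at least $t-1$. Since $x_2,\ldots$ lie in $\fm^{2n_0}$, the hypotheses persist, so inductively $\fm^{n_0}H^i_{\fm}(M/(x_1,\ldots,x_j)M)=0$ for $i+j<t$. As $(x_1,\ldots,x_s)\subseteq\fm^{2n_0}\subseteq\fm^{n_0}$, the standard condition follows.
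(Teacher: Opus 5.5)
The paper gives no proof of this statement; it quotes it from Cuong--Quy [CQ11, Cor.~4.11, 4.12]. Several parts of your proposal are fine:
\begin{itemize}
\item the reduction to the sequences $(\ast)$, including the $0:_Mx$ bookkeeping;
\item your reading of $d$ as $t$ in the last two claims (for $t<d$ the left side has finite length while $H^{d-1}_{\fm}(M)$ need not);
\item the monomial case;
\item the standardness induction, which only uses that $\fm^{n_0}$ kills the two modules entering each splitting.
\end{itemize}
There is a minor slip in the monomial case. The diagram you wrote, from the $x$-sequence to the $z$-sequence with left map $y$, induces the identity on $H^i_{\fm}(M)$ and $y$ on $H^{i+1}_{\fm}(M)$, so it gives a pullback $\xi_x=y^*\xi_z$. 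Your pushout description belongs to the opposite diagram: from $0\to M\xrightarrow{z}M\to M/zM\to 0$ to $0\to M\xrightarrow{x}M\to M/xM\to 0$ with maps $(\mathrm{id},y,\overline{y})$. This is harmless here.

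The genuine gap is the passage from $x=yz$ to $x=\sum_j y_jz_j$, which is the whole content of the theorem. Each term $y_j\xi_{z_j}$ vanishes trivially because $y_j$ kills $H^i_{\fm}(M)$. So the identity $\xi_x=\sum_j y_j\xi_{z_j}$ is not a reduction; it is exactly what must be proved, and you do not prove it. It is also not available as stated:
\begin{itemize}
\item your classes are defined only via $M/aM$ for filter regular $a$, and $y_jz_j$ need not be filter regular;
\item the sequences for $a$, $b$, $a+b$ have different middle terms, so no Baer sum relates them without a further construction.
\end{itemize}
The fallback is not a construction either. The map $\delta_{z_j}$ goes out of $H^i_{\fm}(M/z_jM)$ onto $H^{i+1}_{\fm}(M)$, so it cannot be composed into a section out of $H^{i+1}_{\fm}(M)$. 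Also, $m\mapsto y_jm$ does not define a map $M/z_jM\to M/xM$ once $n\ge 2$. The top-degree claim inherits the same gap.

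The missing idea is to vectorize the $(\mathrm{id},y,\overline{y})$ diagram instead of adding classes.
\begin{enumerate}
\item Let $\mathbf z\colon M\to M^n$, $m\mapsto (z_jm)_j$, and $\mathbf y\colon M^n\to M$, $(m_j)\mapsto\sum_j y_jm_j$. Then $\mathbf y\mathbf z=x$, and $\ker\mathbf z\subseteq 0:_Mx$ has finite length.
\item Consider the morphism of short exact sequences from $0\to M/\ker\mathbf z\xrightarrow{\mathbf z}M^n\to Q\to 0$ to $0\to M/(0:_Mx)\xrightarrow{x}M\to M/xM\to 0$, given by the projection, $\mathbf y$, and the induced map.
\item For $i<t-1$ this induces a morphism from $0\to H^i_{\fm}(M)^n\to H^i_{\fm}(Q)\to H^{i+1}_{\fm}(M)\to 0$ to $(\ast)$. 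It is $\mathbf y=0$ on the left and the identity on the right, so $\xi_x=\mathbf y_*\xi_{\mathbf z}=0$.
\item For $i=t-1$ the right-hand map becomes the inclusion $0:_{H^t_{\fm}(M)}(z_1,\dots,z_n)\subseteq 0:_{H^t_{\fm}(M)}x$. Hence the extension restricted to $0:_{H^t_{\fm}(M)}\fm^{n_0}$ splits.
\item The preimage of $0:_{H^t_{\fm}(M)}\fm^{n_0}$ is then $H^{t-1}_{\fm}(M)\oplus 0:_{H^t_{\fm}(M)}\fm^{n_0}$. It is killed by $\fm^{n_0}$, so it equals $0:_{H^{t-1}_{\fm}(M/xM)}\fm^{n_0}$.
\end{enumerate}
An equivalent route works in $D(R)$. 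There $x=\sum_j y_jz_j$ acts as zero on the two-term truncation $\tau^{\ge i}\tau^{\le i+1}R\Gamma_{\fm}(M)$. Each $y_j$ and $z_j$ acts by a map inducing zero on cohomology, and a composite of two such maps on a complex with cohomology in two adjacent degrees vanishes. So the cone of $x$ splits.
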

\begin{theorem}\label{splitting2}
Let $(R,\mathfrak m)$ be a Noetherian local ring, $M$ be a finitely generated module $R$-module of positive finiteness dimension $t$. Let $n_0$ be a positive integer such that $\fm^{n_0}H^i_{\fm}(M) = 0$ for all $i < t$. 
Let \(x_1,\ldots,x_t\) be a filter regular sequence
of \(M\) contained in \(\mathfrak m^{2n_0}\).
Then for all positive integers \(k\le n_0\) and all
\(j=1,\ldots,t\),
\[
\Hom_R(R/\mathfrak m^k,M/(x_1,\ldots,x_j)M)
\]
are independent of the choice of the sequence
\(x_1,\ldots,x_j\).
Moreover, we have
\[
\Hom_R(R/\mathfrak m^k,M/(x_1,\ldots,x_j)M)
\cong
\bigoplus_{i=0}^{j}
\Hom_R(R/\mathfrak m^k,H^i_{\mathfrak m}(M))^{\binom{j}{i}}.
\]
\end{theorem}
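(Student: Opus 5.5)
The plan is to prove the isomorphism by induction on $j$, and to obtain independence of the sequence as a by-product. The right-hand side $\bigoplus_{i}\Hom_R(R/\fm^k,H^i_{\fm}(M))^{\binom{j}{i}}$ involves only $M$, $k$ and $j$. So once the isomorphism is established for every filter regular sequence in $\fm^{2n_0}$, the left-hand side cannot depend on the choice of $x_1,\ldots,x_j$.

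First, a reduction. For any finitely generated module $N$ we have $\Hom_R(R/\fm^k,N)\cong 0:_N\fm^k$. This submodule lies in $H^0_{\fm}(N)$, so
$$\Hom_R(R/\fm^k,N)\cong \Hom_R(R/\fm^k,H^0_{\fm}(N)).$$
More generally, for any module $L$ with $\fm^{n_0}L'\subseteq L$ for the relevant pieces $L'$, taking $0:\fm^k$ with $k\le n_0$ commutes with finite direct sums. It also factors through $0:_L\fm^{n_0}$, because $0:_L\fm^k\subseteq 0:_L\fm^{n_0}$.

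For the case $j=1$, put $x=x_1$. Theorem \ref{splitting1} gives:
\begin{itemize}
\item if $t\ge 2$, then $H^0_{\fm}(M/xM)\cong H^0_{\fm}(M)\oplus H^1_{\fm}(M)$;
\item if $t=1$, then the top-degree statement of Theorem \ref{splitting1}, read with the finiteness dimension $t$ in place of $d$ (as in \cite{CQ11}), gives $0:_{H^{0}_{\fm}(M/xM)}\fm^{n_0}\cong H^{0}_{\fm}(M)\oplus 0:_{H^{1}_{\fm}(M)}\fm^{n_0}$.
\end{itemize}
In either case, applying $0:\fm^k$ with $k\le n_0$ gives
$$\Hom_R(R/\fm^k,M/xM)\cong\Hom_R(R/\fm^k,H^0_{\fm}(M))\oplus\Hom_R(R/\fm^k,H^1_{\fm}(M)),$$
which is the claim for $j=1$.

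For the inductive step, set $M_1=M/x_1M$. From the long exact sequence for $0\to M/(0:_Mx_1)\to M\to M_1\to 0$, together with the fact that $0:_Mx_1$ has finite length, we get $f_{\fm}(M_1)\ge t-1$. By Theorem \ref{splitting1}, for $i<t-1$ we have $H^i_{\fm}(M_1)\cong H^i_{\fm}(M)\oplus H^{i+1}_{\fm}(M)$, so $\fm^{n_0}H^i_{\fm}(M_1)=0$ for all $i<t-1$. The sequence $x_2,\ldots,x_t$ is filter regular on $M_1$ and lies in $\fm^{2n_0}$. The induction hypothesis, applied to $M_1$ with $t-1$ and the same $n_0$, then yields
$$\Hom_R(R/\fm^k,M/(x_1,\ldots,x_j)M)\cong\bigoplus_{i=0}^{j-1}\Hom_R(R/\fm^k,H^i_{\fm}(M_1))^{\binom{j-1}{i}}.$$
Next, each summand is expanded as follows:
\begin{itemize}
\item For $i<t-1$, the direct sum decomposition of $H^i_{\fm}(M_1)$ gives $\Hom_R(R/\fm^k,H^i_{\fm}(M_1))\cong\Hom_R(R/\fm^k,H^i_{\fm}(M))\oplus\Hom_R(R/\fm^k,H^{i+1}_{\fm}(M))$.
\item The index $i=t-1$ occurs only when $j=t$. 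There I use the socle-type splitting $0:_{H^{t-1}_{\fm}(M_1)}\fm^{n_0}\cong H^{t-1}_{\fm}(M)\oplus 0:_{H^{t}_{\fm}(M)}\fm^{n_0}$ and then take $0:\fm^k$, which gives the same two-term formula.
\end{itemize}
Finally, Pascal's rule $\binom{j-1}{i}+\binom{j-1}{i-1}=\binom{j}{i}$ reassembles the exponents into the stated formula.

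The main obstacle is this top-degree step, $j=t$, $i=t-1$. There $H^t_{\fm}(M)$ need not be finitely generated, and Theorem \ref{splitting1} gives only a splitting of the $\fm^{n_0}$-socle part rather than of the whole module. I would check that this isomorphism is $R$-linear, so that taking $0:\fm^k$ with $k\le n_0$ inside it is legitimate. I would also check that the statement holds with the finiteness dimension $t$ in place of $d$, which is how it is proved in \cite{CQ11}; if not, I would rederive it from the exact sequence
$$0\to H^{t-1}_{\fm}(M)\to H^{t-1}_{\fm}(M_1)\to 0:_{H^t_{\fm}(M)}x_1\to 0.$$
This sequence comes from $0\to M\xrightarrow{x_1}M\to M_1\to 0$ up to finite length, using $x_1\in\fm^{2n_0}$. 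The plan is to construct a splitting on the $\fm^{n_0}$-torsion part, exploiting the fact that $x_1=\sum ab$ with $a,b\in\fm^{n_0}$.
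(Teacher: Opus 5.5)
Your argument is correct. The paper does not prove this statement; it imports it from \cite{CQ11}, where it is a consequence of the splitting theorem. Your induction on $j$ is the natural deduction from that theorem: pass to $M_1=M/x_1M$, apply the hypothesis to $x_2,\ldots,x_j$, expand each $\Hom_R(R/\fm^k,H^i_\fm(M_1))$ via Theorem~\ref{splitting1} (using the $\fm^{n_0}$-socle splitting only for $i=t-1$, $j=t$), and recombine with Pascal's rule. You are also right that the top-degree part of Theorem~\ref{splitting1} must be read with $t$ in place of $d$.

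One point should be made explicit. You only know $f_\fm(M_1)\ge t-1$, and this can be strict, for instance when $0:_{H^t_\fm(M)}x_1$ has finite length. In that case $\fm^{n_0}$ need not kill $H^{t-1}_\fm(M_1)$. So the statement as formulated, with $t=f_\fm$, does not literally apply to $M_1$ with the same $n_0$. You should therefore run the induction for the version in which $t$ is any positive integer with $\fm^{n_0}H^i_\fm(M)=0$ for all $i<t$, and use Theorem~\ref{splitting1} in that form.

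This costs nothing, because the splitting only uses that annihilation together with $x\in\fm^{2n_0}$. It also settles your fallback worry about $x_1=\sum ab$:
\begin{enumerate}
\item Work on the truncation $\tau_{[i,i+1]}\mathbf{R}\Gamma_\fm(M)$; for $i=t-1$, first pull it back along $0:_{H^t_\fm(M)}\fm^{n_0}\hookrightarrow H^t_\fm(M)$.
\item There, every element of $\fm^{n_0}$ acts by an endomorphism that is zero on cohomology.
\item Two such endomorphisms of a complex with cohomology in two adjacent degrees compose to zero in the derived category.
\item Hence multiplication by each product $ab$, and so by $x_1$, is zero there, and the cone splits.
\end{enumerate}
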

\begin{corollary}\label{qlim-fil reg seq}
Let $M$ be a finitely generated module of positive finiteness dimension $t$. Let $n_0$ be a positive integer such that $\mathfrak m^{n_0}H^i_{\mathfrak m}(M)=0
\quad \text{for all } i<t$. Then every filter regular sequence $(x_1,\ldots,x_t)$ of \(M\) contained in \(\mathfrak m^{2n_0}\) is standard. As a consequence, it is an $\mathfrak m^{n_0}$-weak $M$-sequence (i.e. $(x_1,\ldots,x_{i-1})M:_Mx_i\subseteq (x_1,\ldots,x_{i-1})M:_M\mathfrak m^{n_0} $ for all $i=1,\ldots ,t$).
\end{corollary}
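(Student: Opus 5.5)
The plan is to prove standardness by induction on $j$, using the first part of Theorem~\ref{splitting1} to keep track of the local cohomology of the quotients $M_j := M/(x_1,\ldots,x_j)M$. The weak-sequence property will then follow from the case $i=0$.

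\medskip
\noindent\textbf{Step 1: the inductive claim.} Set $M_0=M$. I will show, for $0\le j\le t-1$, the following three facts.
\begin{enumerate}
\item $f_{\fm}(M_j)\ge t-j$.
\item For every $i<t-j$, the module $H^i_{\fm}(M_j)$ is a finite direct sum of copies of modules $H^k_{\fm}(M)$ with $k\le i+j$.
\item Consequently $\fm^{n_0}H^i_{\fm}(M_j)=0$ for all $i<t-j$, because $k\le i+j<t$.
\end{enumerate}
The case $j=0$ is the hypothesis.

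\medskip
\noindent\textbf{Step 2: the inductive step.} Suppose the claim holds for $j-1$. The element $x_j$ is filter regular on $M_{j-1}$, so $0:_{M_{j-1}}x_j$ has finite length.

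First I bound the finiteness dimension. Consider the exact sequences
$$0\to 0:_{M_{j-1}}x_j\to M_{j-1}\xrightarrow{x_j} M_{j-1}\to M_j\to 0.$$
Splitting them into short exact sequences and using that finite-length modules only affect $H^0_{\fm}$, the long exact sequences of local cohomology show that $H^i_{\fm}(M_j)$ is finitely generated whenever $H^i_{\fm}(M_{j-1})$ and $H^{i+1}_{\fm}(M_{j-1})$ are. This gives $f_{\fm}(M_j)\ge t-j$.

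Next I apply Theorem~\ref{splitting1} to $M_{j-1}$. Its finiteness dimension is at least $t-j+1$, and $\fm^{n_0}$ kills $H^i_{\fm}(M_{j-1})$ for $i<t-j+1$. Since $x_j\in\fm^{2n_0}$, the theorem gives
$$H^i_{\fm}(M_j)\cong H^i_{\fm}(M_{j-1})\oplus H^{i+1}_{\fm}(M_{j-1})\quad\text{for } i<t-j.$$
Applying the induction hypothesis to both summands yields (2), and (3) follows.

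\emph{Main obstacle.} Theorem~\ref{splitting1} is phrased in terms of the finiteness dimension $t$ of the module it is applied to. If $f_{\fm}(M_{j-1})$ is strictly larger than $t-j+1$, the annihilation hypothesis may fail in the top degrees. I would therefore use the theorem with $t-j+1$ playing the role of $t$. This requires checking that its proof only uses $\fm^{n_0}H^i_{\fm}=0$ for $i$ below the chosen bound; I expect this to hold, since the splitting comes from the degreewise multiplication by $x$.

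\medskip
\noindent\textbf{Step 3: standardness.} For $i+j<t$ we have $i<t-j$. By Step~1(3), $\fm^{n_0}H^i_{\fm}(M_j)=0$. Since $(x_1,\ldots,x_t)\subseteq\fm^{2n_0}\subseteq\fm^{n_0}$, it follows that
$$(x_1,\ldots,x_t)\,H^i_{\fm}(M_j)=0.$$
So the sequence is standard.

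\medskip
\noindent\textbf{Step 4: the weak-sequence property.} Let $1\le i\le t$ and let $u\in(x_1,\ldots,x_{i-1})M:_Mx_i$. Its image $\bar u$ in $M_{i-1}$ lies in $0:_{M_{i-1}}x_i$. This module has finite length because $x_i$ is filter regular on $M_{i-1}$, so $\bar u\in H^0_{\fm}(M_{i-1})$. Since $0+(i-1)<t$, Step~1(3) gives $\fm^{n_0}\bar u=0$. Hence
$$u\in(x_1,\ldots,x_{i-1})M:_M\fm^{n_0}.$$
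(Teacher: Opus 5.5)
Your proof is correct and follows essentially the same route as the paper. The paper also iterates Theorem~\ref{splitting1} along the sequence to get standardness and to decompose $H^0_{\fm}(M/(x_1,\ldots,x_{i-1})M)$ into copies of the $H^s_{\fm}(M)$ with $s<i$. It then deduces the $\fm^{n_0}$-weak sequence property from $0:_{M_{i-1}}x_i\subseteq H^0_{\fm}(M_{i-1})$, exactly as in your Step~4.

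The obstacle you flag is real for the theorem as phrased: the finiteness dimension of $M_{j-1}$ can exceed $t-j+1$, and then $\fm^{n_0}$ need not kill $H^{t-j+1}_{\fm}(M_{j-1})$. Your remedy is the right one. The splitting holds with $t$ replaced by any bound below which $\fm^{n_0}$ annihilates the local cohomology, and the paper tacitly uses it in that form too.
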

\begin{proof}
Take any filter regular sequence $(x_1,\ldots,x_t)$ of \(M\) contained in \(\mathfrak m^{2n_0}\). By Theorem \ref{splitting1},  $(x_1,\ldots,x_t)$ is standard and 
$$H^0_{\mathfrak m}(M/(x_1,\ldots, x_{i-1})M)\cong \bigoplus_{s=0}^{i-1} H^s_{\mathfrak{m}}(M)^{\binom{i-1}{s}}.$$
So $(x_1,\ldots, x_t)H^0_{\mathfrak m}(M/(x_1,\ldots, x_{i-1})M)=0$ and we have
$$(x_1,\ldots, x_{i-1})M:_Mx_i = H^0_{\mathfrak m}(M/(x_1,\ldots, x_{i-1})M) \subseteq (x_1,\ldots, x_{i-1})M:_M\mathfrak m^{n_0}.$$
The first equality is due to the definition of the filter regular sequence $(x_1,\ldots,x_t)$ on \(M\), and the second inclusion follows from the fact that $m^{n_0}H^s_{\mathfrak{m}}(M)=0$ for all $s=0,\ldots t-1$. Hence, $(x_1,\ldots, x_{t})$ is $\mathfrak m^{n_0}$-weak $M$-sequence.
\end{proof}

\subsection{The limit closure}

Let $x_1,\ldots,x_t$ be a sequence in $R$. There is a useful way to describe the top local cohomology. It can be given as the direct limit
of Koszul cohomologies
\[
H^t_I(M) \cong \varinjlim M/(x_1^n,\ldots,x_t^n)M,
\]
with the map in the system
$ \varphi_{n,m} :
M/(x_1^n,\ldots,x_t^n)M
\longrightarrow
M/(x_1^m,\ldots,x_t^m)M$ is multiplication by $ (x_1\cdots x_t)^{m-n}$ for all \(m \geq n\). The kernel of the map is $(x_1,\ldots,x_t)^{\mathrm{lim}}_M/(x_1,\ldots,x_t)M$ where 
$$(x_1,\ldots,x_t)^{\mathrm{lim}}_M := \bigcup_{n\ge 1} \left( (x_1^{n+1}, \ldots, x_t^{n+1})M :_M (x_1 \cdots x_t)^n \right)$$
is called the \textit{limit closure} of $(x_1,\ldots,x_t)$ in $M$ and it does not depend on the choice of its generators $x_1,\ldots, x_t$ of ideal $(x_1,\ldots, x_t)$. It contains a lot of information of the ring. $M$ is a Cohen-Macaulay module iff $\mathfrak q^{\mathrm{lim}}_M =\mathfrak q$ for some (all) parameter ideals $\mathfrak q$ of $M$. If $M$ is unmixed, then $M$ is generalized Cohen-Macaulay iff 
    $$\sup \{e(\mathfrak q,M)-\ell(M/\mathfrak q^\mathrm{lim}_M)\}<\infty $$ 
    for all parameter ideals $\mathfrak q$ of $M$ (cf. \cite[Corollary 3.3]{CN03}). Moreover, $M$ is Buchsbaum iff $\ell(\mathfrak q^\mathrm{lim}_M/\frak qM)$ is a constant for all parameter ideals $\mathfrak q$ of $M$, see \cite[Theorem 1.1]{MQ22}. By \cite[Theorem 5.1]{CHL99}, if $M$ is a generalized Cohen-Macaulay module, then for every system of parameters $x_1, \ldots, x_d$ of $M$, we have 
 \[
\ell_R\!\left(
(x_1,\ldots,x_{d})^{\mathrm{lim}}_M /(x_1,\ldots,x_{d})M \right) \leq \sum_{i=0}^{d-1}\binom{d}{i}\,\ell_R\!\left(H_{\mathfrak m}^i(M)\right).\]
Moreover, the equality occurs if $x_1, \ldots, x_d$ is standard.
    
    For a Buchsbaum module $M$ (cf. \cite[Theorem 4.7]{G83}), we have a beautiful form 
    $$\mathfrak q^{\mathrm{lim}}_M = \mathfrak q M  + \sum_{i=1}^d(x_1,\ldots, \widehat{x_i},\ldots,x_d)M:_Mx_i.$$ 
    This result is true for standard parameter ideals by \cite[Lemma 3.5]{T86}. We state it in the following. 
   \begin{lemma}\label{lemma qlim}
     Let $M$ be an arbitrary finitely generated module of positive dimension $d$ and of the finiteness dimension $t$. Then for every standard sequence $x_1,\ldots,x_t$ of \(M \) we have 
    $$(x_1,\ldots,x_s)^{\mathrm{lim}}_M = (x_1, \ldots, x_s) M  + \sum_{i=1}^s(x_1,\ldots, \widehat{x_i},\ldots,x_s)M:_Mx_i,$$
    for all $s \le t$. Moreover, 
    \[
\ell_R\!\left(
(x_1,\ldots,x_{s})^{\mathrm{lim}}_M /(x_1,\ldots,x_{s})M \right) = \sum_{i=0}^{s-1}\binom{s}{i}\,\ell_R\!\left(H_{\mathfrak m}^i(M)\right).\]

\end{lemma}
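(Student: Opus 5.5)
We argue by induction on $s$, following Trung's approach \cite[Lemma 3.5]{T86} for standard systems of parameters but keeping track only of the first $t$ elements, so that finiteness dimension replaces generalized Cohen--Macaulayness. Write $\fq_s=(x_1,\ldots,x_s)$ and let $J_s$ denote the right-hand side of the claimed equality. The inclusion $J_s\subseteq (\fq_s)^{\mathrm{lim}}_M$ is formal. If $x_i u\in (x_1,\ldots,\widehat{x_i},\ldots,x_s)M$, multiply by $(x_1\cdots \widehat{x_i}\cdots x_s)$. Then $(x_1\cdots x_s)u\in (x_1^2,\ldots,x_s^2)M$ with respect to the $i$-th generator, after using the expansion $x_j\cdot x_j = x_j^2$ for $j\ne i$. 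Hence $u$ lies in the limit closure. So the content is the reverse inclusion together with the length formula.

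For the reverse inclusion I would use the standard characterization that $(\fq_s)^{\mathrm{lim}}_M/\fq_sM$ is the kernel of $M/\fq_sM\to H^s_{\fq_s}(M)$. This kernel is computed by the Čech/Koszul spectral behaviour. Concretely, I would induct on $s$. The case $s=1$ gives $(x_1)^{\mathrm{lim}}_M = x_1M+0:_Mx_1$, since $0:_Mx_1^{n}=0:_Mx_1=H^0_\fm(M)$. The equality $0:_Mx_1^n=0:_Mx_1$ holds because $x_1H^0_\fm(M)=0$ by standardness and $0:_M x_1\subseteq H^0_\fm(M)$ by filter regularity.

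For the inductive step, set $\overline M=M/x_sM$; then $x_1,\ldots,x_{s-1}$ is standard on $\overline M$. Standardness conditions for $M/(x_1,\ldots,x_j)M$ descend by the short exact sequences and the fact that filter regular sequences of length $\le t$ can be permuted (Remark, part (3)). I would then use the key input from standardness. For $j+i<s$, the element $x_s$ kills $H^i_\fm(M/\fq_jM)$. This makes the relevant sequences $0\to H^i_\fm(M/\fq_j M)\to H^i_\fm(M/(\fq_j+x_s)M)\to H^{i+1}_\fm(M/\fq_jM)\to0$ short exact. It also lets one show that if $x_s^n u\in \fq_{s-1}^{[n]}$-type relations hold modulo $x_s$, then $u$ can be corrected by an element of $\fq_{s-1}M:_Mx_s$. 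In practice this gives $(\fq_s)^{\mathrm{lim}}_M\subseteq (\fq_{s-1}+x_s)^{\mathrm{lim}}$ computed in $\overline M$, pulled back, plus $\fq_{s-1}M:x_s$. The inductive hypothesis on $\overline M$ then produces the colon terms $(x_1,\ldots,\widehat{x_i},\ldots,x_{s})M:x_i$ for $i<s$.

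For the length formula, consider the map $\bigoplus_i (x_1,\ldots,\widehat{x_i},\ldots,x_s)M:_Mx_i \to J_s/\fq_sM$. I would show, again inductively, that $\ell(J_s/\fq_sM)=\ell(J_{s-1}^{\overline M}/\fq_{s-1}\overline M)+\ell\big((\fq_{s-1}M:x_s)/\fq_{s-1}M\big)$. Here $(\fq_{s-1}M:x_s)/\fq_{s-1}M = H^0_\fm(M/\fq_{s-1}M)$, and by the splitting its length is $\sum_{i}\binom{s-1}{i}\ell(H^i_\fm(M))$. Adding the inductive value $\sum_i\binom{s-1}{i}\ell(H^i_\fm(\overline M))$, with $\ell(H^i_\fm(\overline M))=\ell(H^i_\fm(M))+\ell(H^{i+1}_\fm(M))$ for $i<s-1$, and applying Pascal's rule yields $\sum_{i<s}\binom{s}{i}\ell(H^i_\fm(M))$.

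The main obstacle is the inductive step of the inclusion $(\fq_s)^{\mathrm{lim}}_M\subseteq J_s$, and precisely the verification that the kernel contributions from different indices do not overlap. This is what makes the length additive. I would first try to handle it by showing that the sequence $0\to H^0_\fm(M/\fq_{s-1}M)\to (\fq_s)^{\mathrm{lim}}_M/\fq_sM\to (\fq_{s-1})^{\mathrm{lim}}_{\overline M}/\fq_{s-1}\overline M\to 0$ is exact. Injectivity on the left would come from $x_sH^0_\fm(M/\fq_{s-1}M)=0$ and $H^0_\fm(M/\fq_{s-1}M)\cap x_sM\subseteq \fq_{s-1}M$ modulo the standard identifications, and surjectivity from the lifting argument above.
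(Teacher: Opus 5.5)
The paper gives no argument for this lemma beyond attributing it to Trung (Lemma 3.5 of [T86], stated there for standard systems of parameters), so your induction has to stand on its own. The formal inclusion $J_s\subseteq(\fq_s)^{\mathrm{lim}}_M$ and the case $s=1$ are fine. The inductive step, however, breaks exactly where you suspected. Put $B=(\fq_s)^{\mathrm{lim}}_M/\fq_sM$, $\pi\colon M\to\overline M=M/x_sM$ and $C=\pi^{-1}\bigl((\fq_{s-1})^{\mathrm{lim}}_{\overline M}\bigr)/\fq_sM$. Since $M/\fq_sM\to H^s_{\fq_s}(M)$ factors through $H^{s-1}_{\fq_{s-1}}(\overline M)$, $C$ is a \emph{submodule} of $B$, not a quotient. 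The contributions also do overlap: for instance $H^0_\fm(M)$ lies in every colon $(x_1,\ldots,\widehat{x_i},\ldots,x_s)M:_Mx_i$. So your sequence $0\to H^0_\fm(M/\fq_{s-1}M)\to B\to C\to 0$ cannot be exact, and the Pascal step is an arithmetic slip: with $\ell_i=\ell(H^i_\fm(M))$,
\[
\sum_{i=0}^{s-1}\binom{s-1}{i}\ell_i+\sum_{i=0}^{s-2}\binom{s-1}{i}(\ell_i+\ell_{i+1})=\sum_{i=0}^{s-1}\binom{s}{i}\ell_i+\sum_{i=0}^{s-2}\binom{s-1}{i}\ell_i .
\]
Concretely, take $R=k[[a,b]]$, $M=R\oplus k$ and the standard sequence $a,b$ ($s=t=2$). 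Then $B\cong k$, while $H^0_\fm(M/aM)\cong k$ and $C\cong k$ are both the same copy $0\oplus k$ inside $B$, so $\ell(B)=1\neq 1+1$. The correct recursion is $B/C\cong H^{s-1}_\fm(M)$, i.e.\ $\ell(B)=\ell(C)+\ell_{s-1}$, and this one does yield the formula by Pascal's rule.

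Second, the inclusion $(\fq_s)^{\mathrm{lim}}_M\subseteq\pi^{-1}\bigl((\fq_{s-1})^{\mathrm{lim}}_{\overline M}\bigr)+(\fq_{s-1}M:_Mx_s)$, which is the real content of the lemma, is only gestured at (``$\fq_{s-1}^{[n]}$-type relations''). Reduction modulo $x_s$ by itself only gives the opposite containment noted above. Here is one way to obtain both this inclusion and $B/C\cong H^{s-1}_\fm(M)$. First, $B/C$ embeds into the kernel of the connecting map $H^{s-1}_{\fq_{s-1}}(\overline M)\to H^s_{\fq_s}(M)$. That kernel is the image of $H^{s-1}_{\fq_s}(M)\cong H^{s-1}_\fm(M)$ (Nagel--Schenzel), and is isomorphic to it since $x_sH^{s-1}_\fm(M)=0$. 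Finally, it is already hit by $\fq_{s-1}M:_Mx_s$. This holds because, for a standard sequence, the natural map $H^0_\fm(M/\fq_{s-1}M)\to H^{s-1}_\fm(M)$ is onto: up to sign it is the composite of the connecting maps $H^k_\fm(M/\fq_{s-1-k}M)\to H^{k+1}_\fm(M/\fq_{s-2-k}M)$, each onto because $x_{s-1-k}$ kills the target.

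Two smaller points. The short exact sequences only show that $\fq_s^2$ kills $H^i_\fm(M/(\fq_j+x_sR)M)$. So passing standardness to $x_1,\ldots,x_{s-1}$ on $M/x_sM$ needs order-independence of standardness, which you must prove or cite. Also, Theorems \ref{splitting1} and \ref{splitting2} require $x_i\in\fm^{2n_0}$, so $\ell(H^0_\fm(M/\fq_{s-1}M))=\sum_i\binom{s-1}{i}\ell_i$ should be derived from those short exact sequences rather than from ``the splitting''.
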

If $n_0$ is a positive integer such that $\mathfrak m^{n_0}H^i_{\mathfrak m}(M)=0$ for all $i<t$. For every filter regular sequence $x_1,\ldots,x_t$ of \(M\) contained in \(\mathfrak m^{2n_0}\) and for every $s\leq t$ we have 
    $$(x_1,\ldots,x_s)^{\mathrm{lim}}_M = (x_1, \ldots, x_s) M  + \sum_{i=1}^s(x_1,\ldots, \widehat{x_i},\ldots,x_s)M:_Mx_i \subseteq (x_1,\ldots,x_s)M:_M{\mathfrak m^{n_0}}.$$
\subsection{The Frobenius action on local cohomology}
In this subsection, let $R$ be a Noetherian ring containing a field of prime characteristic $p$. Let $F:R \to R, x \mapsto x^p$ denote the Frobenius endomorphism. 
\begin{definition} Let $I$ be an ideal of $R$, we define
\begin{enumerate}
\item The {\it $e$-th Frobenius power} of $I$ is $I^{[p^e]} = (x^{p^e} \mid x \in I)$.
  \item The {\it Frobenius closure} of $I$, $I^F = \{x \mid  x^{p^e} \in I^{[p^e]} \text{ for some } e \ge 0\}$.
\end{enumerate}
\end{definition}
\begin{definition}
Let $I$ be an ideal of $R$. By the Noetherianness of $R$ there is an integer $e$, depending on $I$, such that $(I^F)^{[p^e]}=I^{[p^e]}$. The smallest number $e$ satisfying the condition is called the {\it Frobenius test exponent of $I$}, and denoted by $\mathrm{Fte}(I)$,
$$\mathrm{Fte}(I)=\min \{e\mid (I^F)^{[p^e]}=I^{[p^e]}\}.$$
\end{definition}
A problem of Katzman and Sharp \cite[Introduction]{KS06} asks in its strongest form: does there exist an integer $e$, depending only on the ring $R$, such that for every ideal $I$ we have $(I^F)^{[p^e]} = I^{[p^e]}$. A positive answer to
this question, together with the actual knowledge of a bound for $e$, would give an algorithm to compute the Frobenius closure $I^F$. Unfortunately, Brenner \cite{B06} gave two-dimensional normal standard graded domains with no Frobenius test exponent. In contrast, Katzman and Sharp showed the existence of Frobenius test exponent if we restrict to the class of parameter ideals in a Cohen-Macaulay ring. Therefore, it is natural to ask the question whether there exists an integer $e$ such that for every parameter ideal $\fq$ of $R$ we have $(\fq^F)^{[p^e]} = \fq^{[p^e]}$. We define the {\it Frobenius test exponent for parameter ideals} of $R$, $\mathrm{Fte}(R)$, the smallest integer $e$ satisfying the above condition and $\mathrm{Fte}(R) = \infty$ if no such integer $e$ exists. It should be noted that the authors used the finiteness of $\mathrm{Fte}(R)$, if it exists, to find an upper bound of the multiplicity of a local ring \cite{HQ20}.\\
For any ideal $I = (x_1, \ldots, x_t)$, the Frobenius endomorphism $F:R \to R$ and its localizations induce a natural Frobenius action on local cohomology $F:H^i_I(R) \to H^i_{I^{[p]}}(R) \cong H^i_{I}(R)$ for all $i \ge 0$. In general, let $A$ be an Artinian $R$-module with Frobenius action $F: A \to A$. The {\it Frobenius closure $0^F_A$ of the zero submodule} of $A$ is defined to be the submodule of $A$ consisting of all elements $z$ such that $F^e(z) = 0$ for some $e \ge 0$. Hence $0^F_A$ is the nilpotent part of $A$ under the Frobenius action. By \cite[Proposition 1.11]{HS77}, \cite[Proposition 4.4]{L97} and \cite{Sh07} we have the following.
\begin{theorem}
Let $(R, \fm)$ be a local ring of prime characteristic $p>0$, and $A$ be an Artinian $R$-module with a Frobenius action $F: A \to A$. Then there exists a non-negative integer $e$ such that $0^F_A = \ker (A \overset{F^e}{\longrightarrow} A)$.
\end{theorem}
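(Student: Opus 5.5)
The plan is to show that the ascending chain $\ker F\subseteq \ker F^2\subseteq\cdots$ of submodules of $A$ stabilizes. Its union is exactly $0^F_A$, so stabilization at step $e$ gives $0^F_A=\ker(A\overset{F^e}{\longrightarrow}A)$. Since $A$ is Artinian rather than Noetherian, ascending chains need not stabilize. I would therefore convert the problem into a \emph{descending} chain statement, or into an ascending chain in a Noetherian object via Matlis duality.

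First I would make reductions. Each $N_e=\ker F^e$ is an $R$-submodule: $F(rz)=r^pF(z)$, so $F^e(rz)=r^{p^e}F^e(z)$. The union $N:=0^F_A$ is an $F$-stable submodule on which $F$ acts nilpotently elementwise, and it suffices to find $e$ with $F^e(N)=0$. Passing to $\widehat R$ changes nothing, since $A$ and its Frobenius action are naturally $\widehat R$-linear in the appropriate sense. So we may assume $R$ is complete.

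Next, for each $e$ let $B_e$ be the $R$-submodule of $N$ generated by $F^e(N)$. Then $B_{e+1}\subseteq B_e$, because $F^{e+1}(N)=F^e(F(N))\subseteq F^e(N)$. Since $N$ is Artinian, this chain stabilizes at some $B:=B_e=B_{e+1}=\cdots$. Then $B$ is $F$-stable, generated as an $R$-module by $F(B)$, and every element of $B$ is $F$-nilpotent. It remains to prove $B=0$, for then $F^e(N)=0$ and $e$ works.

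The main obstacle is this last step: a nonzero Artinian module cannot carry a Frobenius action that is elementwise nilpotent and whose image generates the whole module. I would attack it through Matlis duality over the complete ring, in the spirit of \cite{HS77} and \cite{L97}. The action $F$ on $B$ is the same as an $R$-linear map $R^{(1)}\otimes_R B\to B$, where $R^{(1)}\otimes_R -$ is the Frobenius base change. The generation condition says this map is surjective. Dualizing gives a finitely generated module $M=B^\vee$ with an injective $R$-linear map $\theta\colon M\to (R^{(1)}\otimes_R B)^\vee$. Elementwise nilpotence says that every element of $M$ is killed by some iterate of $\theta$. I expect the point is that injectivity of all the iterates of $\theta$ contradicts nilpotence unless $M=0$.

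If duality becomes too technical, my fallback is Sharp's elementary argument \cite{Sh07}, which works inside $B$ directly. Suppose $B\neq 0$ and pick a nonzero socle element killed by $F$; it exists because $\ker F\cap B\neq 0$ and is a nonzero Artinian submodule, hence has nonzero socle. I would then use $B=R\,F(B)$ and the Artinian property to derive a contradiction from a minimality choice, for instance a minimal nonzero $F$-stable submodule of $B$ generated by its Frobenius image.
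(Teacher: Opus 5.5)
Your reduction is correct, but the step you leave open is equivalent to the whole theorem, and neither of your two sketches proves it.

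\textbf{The reduction.} The $R$-spans $B_e=R\,F^e(N)$ do form a descending chain. Its stable value $B$ is $F$-stable with $B=R\,F(B)=R\,F^e(B)$ for all $e$, and $B=0$ gives $F^e(N)=0$. However, the remaining claim is equivalent to the theorem itself: applying the theorem to $B$ gives $F^e(B)=0$, hence $B=R\,F^e(B)=0$. So all of the content sits in the step you do not prove. The paper gives no argument for this statement either; it quotes \cite{HS77}, \cite{L97}, \cite{Sh07}.

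\textbf{The duality sketch.} The sentence ``elementwise nilpotence says that every element of $M$ is killed by some iterate of $\theta$'' is not a translation of the hypothesis.

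First, $\theta$ lands in $(R^{(1)}\otimes_R B)^\vee$, so it has no iterates until you pass to a complete regular ring via Cohen's structure theorem and identify that module with $R^{(1)}\otimes_R M$. Injectivity of the iterates then needs flatness of Frobenius.

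Second, and decisively, Matlis duality turns the increasing union $B=\bigcup_e\ker F^e$ into a decreasing chain $(B/\ker F^e)^\vee$ of submodules of $M$ with zero intersection. That is not an elementwise statement about $M$. Proving that this chain actually reaches $0$ is exactly the Hartshorne--Speiser--Lyubeznik statement. Indeed, if your claim held, the kernels of the iterates $\theta_e$ would form an ascending chain with union $M$. Noetherianity would then give $\theta_e=0$, i.e.\ $F^e(B)=0$, without using injectivity at all. So that sentence assumes the conclusion.

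\textbf{The fallback.} This does not close the gap either. A minimal nonzero $F$-stable $C\subseteq B$ with $C=R\,F(C)$ does exist by DCC. But minimality only tells you that every proper $F$-stable submodule of $C$ is killed by some power of $F$. Meanwhile $C=\bigcup_e(\ker F^e\cap C)$ is an increasing union of such proper submodules, which is no contradiction in an Artinian module: $E_R(k)$ over a DVR is an increasing union of proper submodules.

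A socle element killed by $F$ does not help either. On $R=k[[t]]$, $B=H^1_{\fm}(R)=R_t/R$, the $p$-linear map $[x]\mapsto[t^px^p]$ kills $\mathrm{Soc}(B)$ and still satisfies $B=R\,F(B)\neq 0$. Here elementwise nilpotence fails, which shows it must be used on all of $B$ at once, not just on the socle. That global input is the real work in \cite{L97} (through $F$-finite modules) or \cite{Sh07}. As written, you must either cite the theorem, as the paper does, or supply one of those arguments.
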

\begin{definition}
\begin{enumerate}
\item Let $A$ be an Artinian $R$-module with Frobenius action $F$.  The {\it Hartshorne-Speiser-Lyubeznik number} of $A$ is denoted by $\mathrm{HSL}(A)$ and is defined as
$$\mathrm{HSL}(A)=\min\{e\mid 0^F_A = \ker (A \overset{F^e}{\longrightarrow} A)\}.$$
\item Notice that $H^i_{\fm}(R)$ is always Artinian for all $i \ge 0$. We define the {\it Hartshorne-Speiser-Lyubeznik number} of a local ring $(R, \frak m)$ as follows
$$\mathrm{HSL}(R): = \min \{ e \mid   0^F_{H^i_{\fm}(R)} =   \ker (H^i_{\fm}(R) \overset{F^e}{\longrightarrow} H^i_{\fm}(R)) \text{ for all } i = 0, \ldots, d\}.$$
\end{enumerate}
\end{definition}

\section{Main result}
We first combine the splitting results for local cohomology, Theorems 2.7, 2.8, with the theory of limit closure. The following result was proved in the Buchsbaum case using different methods, those methods cannot be extended to the generalized Cohen–Macaulay case (see \cite[Lemma 2.6]{GMMQS26}).  
\begin{lemma}\label{qlim splitting} 
Let $(R,\fm)$ be a local ring and $M$ be a finitely generated $R$-module of positive finiteness dimension $t>0$. Let $n_0$ be a positive integer such that $\fm^{n_0}H_{\fm}^j(M)=0$ for all $j<t$. Then for every filter regular sequence $x_1,\ldots,x_t $ of $M$ contained in $\fm^{2n_0} $ , we have
$$ (x_1,\ldots,x_t)_M^{\mathrm{lim}}/(x_1,\ldots,x_t)M \cong \bigoplus_{i=0}^{t-1} H^i_{\mathfrak{m}}(M)^{\binom{t}{i}}.$$  
\end{lemma}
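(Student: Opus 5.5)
We aim to realize $(x_1,\ldots,x_t)^{\mathrm{lim}}_M/(x_1,\ldots,x_t)M$ as a submodule of $\Hom_R(R/\fm^{n_0},M/\fq M)$, where $\fq=(x_1,\ldots,x_t)$, and to use Theorem \ref{splitting2} to decompose it. Two inputs from earlier results are needed. First, by Corollary \ref{qlim-fil reg seq} the sequence is standard, so Lemma \ref{lemma qlim} and the remark after it apply. The remark gives
$$\fq^{\mathrm{lim}}_M\subseteq \fq M:_M\fm^{n_0},$$
so $\fq^{\mathrm{lim}}_M/\fq M\subseteq 0:_{M/\fq M}\fm^{n_0}\cong\Hom_R(R/\fm^{n_0},M/\fq M)$. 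Second, Lemma \ref{lemma qlim} gives the length
$$\ell\big(\fq^{\mathrm{lim}}_M/\fq M\big)=\sum_{i=0}^{t-1}\binom{t}{i}\ell(H^i_\fm(M)).$$
We may assume every $H^i_\fm(M)$ with $i<t$ has finite length. They are finitely generated since $i<f_\fm(M)$, and Artinian, which is what makes the lengths finite.

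Next we apply Theorem \ref{splitting2} with $k=n_0$ and $j=t$:
$$\Hom_R(R/\fm^{n_0},M/\fq M)\cong\bigoplus_{i=0}^{t}\Hom_R(R/\fm^{n_0},H^i_\fm(M))^{\binom{t}{i}}.$$
For $i<t$ we have $\fm^{n_0}H^i_\fm(M)=0$, so these summands are exactly $H^i_\fm(M)$. This gives $\bigoplus_{i<t}H^i_\fm(M)^{\binom ti}\oplus(0:_{H^t_\fm(M)}\fm^{n_0})$. The remaining task is to show that $\fq^{\mathrm{lim}}_M/\fq M$ is precisely the "lower" part $\bigoplus_{i<t}$, and not some other submodule of the same length.

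To do this, we identify the summand $0:_{H^t_\fm(M)}\fm^{n_0}$ with the image of $0:_{M/\fq M}\fm^{n_0}$ under the natural map $M/\fq M\to\varinjlim M/(x_1^n,\ldots,x_t^n)M\cong H^t_\fq(M)$. By the description in the limit-closure subsection, the kernel of this map restricted to $0:_{M/\fq M}\fm^{n_0}$ is exactly $\fq^{\mathrm{lim}}_M/\fq M$. This gives an exact sequence
$$0\to \fq^{\mathrm{lim}}_M/\fq M\to \Hom_R(R/\fm^{n_0},M/\fq M)\to \Hom_R(R/\fm^{n_0},H^t_\fq(M)).$$
To control the last term we compare with $H^t_\fm(M)$, either through the inductive construction behind Theorems \ref{splitting1} and \ref{splitting2} or through the map $H^t_\fq(M)\to H^t_\fm(M)$. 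The goal is to show that the image of the right-hand map has length exactly $\binom tt\,\ell(0:_{H^t_\fm(M)}\fm^{n_0})$.

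The main obstacle is this identification of the image in the top summand. It requires tracing how the splitting in Theorem \ref{splitting1} is built: inductively, via $0\to M/x_1M\to\cdots$ and the split sequences $0\to H^{i}_\fm(M)\to H^i_\fm(M/x_1M)\to H^{i+1}_\fm(M)\to 0$. One must check that the top piece arising at each stage corresponds to the top local cohomology modulo the remaining elements.

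If that tracing proves hard, the fallback is the following. The length count gives $\ell(\mathrm{coker})=\ell(0:_{H^t_\fm(M)}\fm^{n_0})$. It therefore suffices to show that $\fq^{\mathrm{lim}}_M/\fq M$ injects into, or surjects onto, $\bigoplus_{i<t}H^i_\fm(M)^{\binom ti}$ compatibly with the splitting, for instance because it lies in the image of the lower-degree pieces. Equal lengths then force the isomorphism. Since all these modules are killed by $\fm^{n_0}$ and have finite length, showing the containment is enough to conclude.
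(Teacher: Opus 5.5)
Your outline matches the paper's. You place $\fq^{\mathrm{lim}}_M/\fq M$ inside $(\fq M:_M\fm^{n_0})/\fq M$ and decompose the latter by Theorem~\ref{splitting2} as $A\oplus B$, with $A=\bigoplus_{i<t}H^i_\fm(M)^{\binom{t}{i}}$ and $B=0:_{H^t_\fm(M)}\fm^{n_0}$. You then finish with the length formula of Lemma~\ref{lemma qlim}. The step that carries the content is that $\fq^{\mathrm{lim}}_M/\fq M$ embeds in $A$, i.e.\ that its component in the top summand $B$ vanishes. You leave exactly this open as ``the main obstacle'', so the proposal does not yet prove the lemma.

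Your stated intermediate goal would not close the gap either. That the image of $\Hom_R(R/\fm^{n_0},M/\fq M)\to\Hom_R(R/\fm^{n_0},H^t_\fq(M))$ has length $\ell(B)$ already follows from the two length computations you have. A kernel of the right length need not have the right isomorphism type: for a field $k$, the kernel of $k[[x]]/(x^2)\oplus k\to k$, $(a,b)\mapsto\bar a$, is $k\oplus k$, not $k[[x]]/(x^2)$. Only your fallback, an actual injection of $\fq^{\mathrm{lim}}_M/\fq M$ into $A$, suffices, and you give no argument for it.

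The paper fills this step as follows.
\begin{itemize}
\item By Nagel--Schenzel, since $x_1,\ldots,x_t$ is filter regular, $H^t_\fm(M)\cong H^0_\fm(H^t_\fq(M))$ is a submodule of $H^t_\fq(M)$. The natural map goes $H^t_\fm(M)\to H^t_\fq(M)$, not $H^t_\fq(M)\to H^t_\fm(M)$ as you write.
\item Hence the natural map $M/\fq M\to H^t_\fq(M)=\varinjlim M/(x_1^n,\ldots,x_t^n)M$ sends $(\fq M:_M\fm^{n_0})/\fq M$ into $0:_{H^t_\fm(M)}\fm^{n_0}$.
\item The paper treats this map as the projection onto the top summand $B$ of the splitting. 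Its kernel is $\fq^{\mathrm{lim}}_M/\fq M$, so the $B$-component of $\fq^{\mathrm{lim}}_M/\fq M$ is zero.
\item Therefore $\fq^{\mathrm{lim}}_M/\fq M\hookrightarrow A$, and equality of lengths gives the isomorphism.
\end{itemize}

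To make this fully rigorous, you must verify exactly the compatibility you flagged. One option is to show that the projection onto the top summand of Theorems~\ref{splitting1} and \ref{splitting2} is induced by this natural map; tracing the connecting homomorphisms in the inductive construction is the natural way to do this. A weaker sufficient condition is that the natural map is injective on the summand $B$. Then it is an isomorphism onto $0:_{H^t_\fm(M)}\fm^{n_0}$ by length, so its kernel is a complement of $B$ and hence isomorphic to $A$.
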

\begin{proof}
By Theorem \ref{splitting2}, 
$$((x_1,\ldots,x_t)M:_M \fm^{n_0})/(x_1,\ldots,x_t)M \cong \big( \bigoplus_{i=0}^{t-1} H^i_{\mathfrak{m}}(M)^{\binom{t}{i}} \big) \,\, \oplus\,\, 0:_{H^t_{\fm}(M)}\fm^{n_0}.$$
By \cite{NS95}, $H^t_{\fm}(M) \cong H^0_{\fm}(H^t_{(x_1,\ldots,x_t)}(M))$ is a submodule of $H^t_{(x_1,\ldots,x_t)}(M)$. Moreover, we have
$$(x_1,\ldots, x_t)^{\mathrm{lim}}_M \subseteq (x_1,\ldots, x_t)M:_M \fm^{n_0}.$$ This implies that the kernel of the map $M/(x_1,\ldots, x_t)M \to H^t_{(x_1,\ldots,x_t)}(M)$ is 
$$(x_1,\ldots, x_t)^{\mathrm{lim}}_M/(x_1,\ldots, x_t)M$$ 
that is contained in 
$$((x_1,\ldots, x_t)M:_M \fm^{n_0})/(x_1,\ldots, x_t)M.$$
The map 
$$(x_1,\ldots, x_t)^{\mathrm{lim}}_M/(x_1,\ldots, x_t)M\to 0:_{H^t_{\fm}(M)}\fm^{n_0}$$ is a zero map. Hence,
$$(x_1,\ldots,x_t)_M^{\mathrm{lim}}/(x_1,\ldots,x_t)M \hookrightarrow \bigoplus_{i=0}^{t-1} H^i_{\mathfrak{m}}(M)^{\binom{t}{i}}.$$
On the other hand, by Lemma \ref{lemma qlim} we have 
\[
\ell_R\!\left(
(x_1,\ldots,x_{t})^{\mathrm{lim}}_M /(x_1,\ldots,x_{t})M \right) = \sum_{i=0}^{t-1}\binom{t}{i}\,\ell_R\!\left(H_{\mathfrak m}^i(M)\right).\]
Combining the above observations, we obtain the following 
$$(x_1,\ldots,x_t)_M^{\mathrm{lim}}/(x_1,\ldots,x_t)M \cong \bigoplus_{i=0}^{t-1} H^i_{\mathfrak{m}}(M)^{\binom{t}{i}}.$$
\end{proof}
Applying the previous lemma to $R$, we obtain the following.
\begin{lemma} \label{qlimforring}
Let $(R,\fm)$ be a local ring of positive finiteness dimension $t>0$. Let $n_0$ be a positive integer such that $\fm^{n_0}H_{\fm}^j(R)=0$ for all $j<t$. Then for every filter regular sequence $x_1,\ldots,x_t$ of $R$ contained in $\fm^{2n_0} $, we have
$$ (x_1,\ldots,x_t)^{\mathrm{lim}}/(x_1,\ldots,x_t) \cong \bigoplus_{i=0}^{t-1} H^i_{\mathfrak{m}}(R)^{\binom{t}{i}}.$$ 
If we further assume that $R$ has prime characteristic $p$, then the above isomorphism is compatible with the Frobenius action. Hence
$$ ((x_1,\ldots,x_t)^{\mathrm{lim}}\cap (x_1,\ldots,x_t)^F) /(x_1,\ldots,x_t) \cong \bigoplus_{i=0}^{t-1} {0 ^F_{H^i_{\mathfrak{m}}(R)}} ^{\binom{t}{i}}.$$   
\end{lemma}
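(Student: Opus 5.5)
The first isomorphism is Lemma \ref{qlim splitting} with $M=R$, so all the work is in the Frobenius compatibility and the description of the Frobenius-closure part. I would not try to make the abstract isomorphism of Lemma \ref{qlim splitting} Frobenius-equivariant directly. Instead I would use the explicit description of Lemma \ref{lemma qlim}, which applies since Corollary \ref{qlim-fil reg seq} makes the sequence standard. Write $\fq=(x_1,\ldots,x_t)$ and $\fq_{\hat i}=(x_1,\ldots,\widehat{x_i},\ldots,x_t)$. Then
$$\fq^{\mathrm{lim}}/\fq=\sum_{i=1}^t \big((\fq_{\hat i}:x_i)+\fq\big)/\fq .$$
Each summand should be identified with $H^0_\fm(R/\fq_{\hat i})$ modulo the relevant piece of $\fq$. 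By Theorem \ref{splitting1}, applied iteratively, $H^0_\fm(R/\fq_{\hat i})\cong\bigoplus_{s\le t-1}H^s_\fm(R)^{\binom{t-1}{s}}$.

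The Frobenius action $F$ on $R/\fq$ is $x+\fq\mapsto x^p+\fq^{[p]}$, landing in $R/\fq^{[p]}$. The sequence $x_1^p,\ldots,x_t^p$ is again filter regular and lies in $\fm^{2n_0}$, so all the splittings hold for it too. The key point is that the isomorphism $H^i_\fm(R/xR)\cong H^i_\fm(R)\oplus H^{i+1}_\fm(R)$ is built from the exact sequence $0\to R\xrightarrow{x}R\to R/xR\to 0$. The Frobenius maps the sequence for $x$ to the sequence for $x^p$, with the middle vertical map $F$ and the left one $x^{p-1}F$. Hence the connecting maps commute with Frobenius, and the splitting is compatible up to this twist.

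The next step is to check that the composite identification
$$\fq^{\mathrm{lim}}/\fq\;\cong\;\bigoplus_i H^i_\fm(R)^{\binom ti}\;\cong\;(\fq^{[p]})^{\mathrm{lim}}/\fq^{[p]}$$
intertwines $F$ on the left with the natural Frobenius on $\bigoplus H^i_\fm(R)$. I would argue this through the direct-limit/\v{C}ech description. The image of $\fq^{\mathrm{lim}}/\fq$ sits inside the Koszul homology picture. The summand $H^i_\fm(R)^{\binom ti}$ corresponds to choosing $t-i$ of the $x_j$'s that are "used up" in the \v{C}ech/Koszul comparison, and Frobenius acts on \v{C}ech cocycles by $p$-th powers. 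With that compatibility, $F^e(z)=0$ in $R/\fq^{[p^e]}$ holds exactly when each component is killed by $F^e$ in $H^i_\fm(R)$. The elements of $\fq^{\mathrm{lim}}\cap\fq^F$ are those killed by some $F^e$, so they correspond exactly to $\bigoplus 0^F_{H^i_\fm(R)}{}^{\binom ti}$, which gives the second isomorphism.

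The main obstacle is the compatibility itself. The splittings in Theorems \ref{splitting1} and \ref{splitting2} are not canonical: they depend on $x\in\fm^{2n_0}$ and on lifts through $\fm^{n_0}$-annihilators. So I must show that the chosen splittings for $\fq$ and for $\fq^{[p]}$ can be made to match under $F$. If a direct diagram chase fails, I would retreat to a weaker, filtration-level statement that still suffices for nilpotence. Frobenius preserves the natural filtration of $\fq^{\mathrm{lim}}/\fq$ by the images of the summands $(\fq_{\hat i}:x_i)$ and their iterates, and it induces the natural Frobenius on the associated graded pieces $H^i_\fm(R)$. I would then compare lengths: the nilpotent part of the total is killed by $F^{e}$ with $e=\mathrm{HSL}(R)$ on each graded piece, and the length computation of Lemma \ref{lemma qlim} matches the two sides.
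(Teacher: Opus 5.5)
Your first step matches the paper, which obtains the isomorphism from Lemma \ref{qlim splitting} with $M=R$ and then states the Frobenius compatibility without further argument. You are right that this compatibility is the whole content of the second assertion. The isomorphism of Lemma \ref{qlim splitting} comes from a length count applied to the non-canonical isomorphism of Theorem \ref{splitting2}, so there is no explicit map to test against $z\mapsto z^p$. But your proposal has a genuine gap exactly at this point.

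Compare $0\to R\xrightarrow{x}R\to R/xR\to 0$ with the sequence for $x^p$. The left vertical map is $F$ itself; the twist $x^{p-1}F$ arises only when you map the sequence for $x$ to itself. This comparison shows only that the short exact sequences $0\to H^i_\fm(R)\to H^i_\fm(R/xR)\to H^{i+1}_\fm(R)\to 0$ are compatible with Frobenius. Relative to chosen splittings, the induced map $H^i_\fm(R/xR)\to H^i_\fm(R/x^pR)$ is then triangular. Its diagonal entries are the natural Frobenius maps, and it has an off-diagonal $p$-linear entry $\delta\colon H^{i+1}_\fm(R)\to H^i_\fm(R)$. Re-choosing the splittings changes $\delta$ only by $F\circ\alpha-\beta\circ F$ with $\alpha,\beta\in\Hom_R(H^{i+1}_\fm(R),H^i_\fm(R))$.

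For your approach you must show that $\delta$ can be made zero. This has to hold simultaneously along the tower $x,x^p,x^{p^2},\dots$. For $t\ge 2$ it must also hold coherently for all the pieces $H^0_\fm(R/\fq_{\hat i})$, which in general overlap inside $\fq^{\mathrm{lim}}/\fq$. Nothing in the proposal does this. The \v{C}ech/Koszul paragraph restates the desired compatibility rather than proving it.

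The fallback does not close the gap. A Frobenius-stable filtration whose graded pieces carry the natural Frobenius only makes Frobenius block-triangular, and nilpotency indices add up in extensions. For example, on $k^2$ ($k=R/\fm$) with $F(a,c)=(c^p,0)$, both the submodule $k\oplus 0$ and the quotient have zero Frobenius, yet $F\neq 0$. So you would only get that $(\fq^{\mathrm{lim}}\cap\fq^F)/\fq$ is killed by $F^{N}$ with $N=\sum_i\binom ti\,\mathrm{HSL}(H^i_\fm(R))$. That is a bound of the additive type recalled in the introduction, not a kill by $F^{\mathrm{HSL}(R)}$, which is what Theorem \ref{mainthm} uses.

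The length formula of Lemma \ref{lemma qlim} does not help either. It measures all of $\fq^{\mathrm{lim}}/\fq$, while the Frobenius-nilpotent part of an extension is not determined by those of its ends. For $F(a,c)=(a^p+\lambda c^p,0)$ with $\lambda\notin k^p$, the nilpotent part is zero although the quotient is entirely nilpotent. So no length count gives $(\fq^{\mathrm{lim}}\cap\fq^F)/\fq\cong\bigoplus_{i=0}^{t-1}0^F_{H^i_\fm(R)}{}^{\binom ti}$. The missing ingredient is a genuinely Frobenius-equivariant choice of splittings, i.e.\ a construction of the decomposition natural enough to commute with $z\mapsto z^p$ when passing from $\fq$ to $\fq^{[p]}$.
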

Let $I$ be an arbitrary ideal of $R$, and $e$ an integer. Notice that $(I^F)^{[p^e]}\subseteq (I^{[p^e]})^F$, so $\mathrm{Fte}(I)\leq \mathrm{Fte}(I^{[p^e]})+e$.
\begin{theorem} \label{mainthm}
 Let $(R,\fm)$ be a local ring of prime characteristic $p$ and of positive finiteness dimension $t>0$. Let $n_0$ be a positive integer such that $\fm^{n_0} H^i_{\fm}(R) = 0$ for all $i < t$. Then for every filter regular sequence $x_1,\ldots,x_t$ of $R$ we have
$$\mathrm{Fte}(x_1,\ldots,x_t) \le \lceil  \log_p(2n_0)\rceil + \mathrm{HSL}(R),$$
where $\lceil x\rceil$ is the smallest integer that is greater than or equal to $x$. 
\end{theorem}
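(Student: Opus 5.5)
Set $e_0=\lceil \log_p(2n_0)\rceil$, so that $p^{e_0}\ge 2n_0$, and write $\fq=(x_1,\ldots,x_t)$. By the remark preceding the theorem, $\mathrm{Fte}(\fq)\le \mathrm{Fte}(\fq^{[p^{e_0}]})+e_0$. The powers $x_1^{p^{e_0}},\ldots,x_t^{p^{e_0}}$ still form a filter regular sequence, and they lie in $\fm^{p^{e_0}}\subseteq \fm^{2n_0}$. It therefore suffices to prove the following: for every filter regular sequence $y_1,\ldots,y_t$ contained in $\fm^{2n_0}$, the ideal $\fq'=(y_1,\ldots,y_t)$ satisfies $\mathrm{Fte}(\fq')\le \mathrm{HSL}(R)$. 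Set $e=\mathrm{HSL}(R)$. For $z\in (\fq')^F$ we must show $z^{p^e}\in \fq'^{[p^e]}$.

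\textbf{Step 1: reduce to the limit closure.} Consider the natural map $R/\fq'\to H^t_{\fq'}(R)=\varinjlim R/(y_1^n,\ldots,y_t^n)$. It is compatible with Frobenius, and its kernel is $\fq'^{\mathrm{lim}}/\fq'$. The key question is whether $H^t_{\fq'}(R)$ has Frobenius-nilpotent elements that require more than $e$ iterations, since $\mathrm{HSL}(R)$ only controls $H^i_\fm(R)$.

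My plan is to avoid this issue. Take $z\in(\fq')^F$. Its image in $H^t_{\fq'}(R)$ is Frobenius nilpotent, and it is killed by $\fm^{n_0}$. Indeed, $z$ lies in $\fq':\fm^{n_0}$: by Lemma \ref{lemma qlim} and the displayed inclusion after it, $\fq'^{\mathrm{lim}}\subseteq\fq':\fm^{n_0}$, and the Frobenius closure of $\fq'$ should be contained in $\fq'^{\mathrm{lim}}$ modulo $\fq':\fm^{n_0}$. The image therefore lies in $0:_{H^0_\fm(H^t_{\fq'}(R))}\fm^{n_0}\subseteq H^t_\fm(R)$, using $H^t_\fm(R)\cong H^0_\fm(H^t_{\fq'}(R))$ as in the proof of Lemma \ref{qlim splitting}. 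Hence it lies in $0^F_{H^t_\fm(R)}$, so $F^e$ kills it. This gives $z^{p^e}\in (\fq'^{[p^e]})^{\mathrm{lim}}$, since the $\mathrm{lim}$-kernel for $\fq'^{[p^e]}$ is the kernel of $R/\fq'^{[p^e]}\to H^t_{\fq'}(R)$.

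Because the element $z$ was only known to satisfy $z\in\fq'^{\mathrm{lim}}$ or $z\in\fq':\fm^{n_0}$, I would first establish $(\fq')^F\subseteq \fq'^{\mathrm{lim}}$ directly. Frobenius closure maps to zero under some $F^{e'}$, and the kernel of $R/\fq'^{[p^{e'}]}\to H^t$ pulls back suitably. In fact $z\in(\fq')^F$ implies that $z$ maps to a nilpotent element of $H^t_{\fq'}(R)$. Handling this nilpotence through $H^t_\fm(R)$, which is the target of the $\fm^{n_0}$-torsion, is the delicate point.

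\textbf{Step 2: inside the limit closure.} Now $z^{p^e}\in(\fq'^{[p^e]})^{\mathrm{lim}}\cap(\fq'^{[p^e]})^F$. The sequence $y_i^{p^e}$ is again filter regular and lies in $\fm^{2n_0}$. By Lemma \ref{qlimforring}, Frobenius-compatibly,
\[
\bigl((\fq'^{[p^e]})^{\mathrm{lim}}\cap(\fq'^{[p^e]})^F\bigr)/\fq'^{[p^e]}\cong\bigoplus_{i<t}\bigl(0^F_{H^i_\fm(R)}\bigr)^{\binom{t}{i}}.
\]
This is killed by $F^e$, which gives $z^{p^{2e}}\in\fq'^{[p^{2e}]}$. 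That bound is $2e$, not $e$. To obtain $e$, I would instead apply Lemma \ref{qlimforring} to $\fq'$ itself with the element $z$. The component of $z$ in the limit closure part is killed by $F^e$ via the HSL of $H^i_\fm(R)$ for $i<t$, and the component in $0:_{H^t_\fm(R)}\fm^{n_0}$ is killed by $F^e$ via the HSL of $H^t_\fm(R)$. This uses the decomposition of $(\fq':\fm^{n_0})/\fq'$ from Theorem \ref{splitting2} (with $j=t$), made compatible with Frobenius in the manner of Lemma \ref{qlimforring}. Thus $F^e$ kills the class of $z$, i.e. $z^{p^e}\in\fq'^{[p^e]}$.

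\textbf{Main obstacle.} The main difficulty is showing $(\fq')^F\subseteq \fq':\fm^{n_0}$ (or $\subseteq\fq'^{\mathrm{lim}}$) together with the Frobenius compatibility of the splitting that includes the $H^t_\fm$ summand. I would try to prove this by showing that $\ker(R/\fq'\to H^t_\fm\text{-part})$ is exactly the limit closure, and that nilpotent classes in $H^t_{\fq'}(R)$ of elements of $R/\fq'$ lie in its $\fm$-torsion.
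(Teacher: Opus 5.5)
\textbf{There is a genuine gap: the containment your proof rests on is false.} Your reduction to a filter regular sequence in $\fm^{2n_0}$ matches the paper. So does the target of your Step 1, namely $z^{p^e}\in((\fq')^{[p^e]})^{\mathrm{lim}}$ with $e=\mathrm{HSL}(R)$. But both steps use the claim that $(\fq')^F\subseteq \fq':\fm^{n_0}$ and that the image of $z$ in $H^t_{\fq'}(R)$ is killed by $\fm^{n_0}$. This fails.

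Let $k$ be a field of characteristic $p$ and $R=k[[x,y]]/(xy^p,y^{2p})$.
\begin{itemize}
\item $H^0_\fm(R)$ is the $k$-span of $y^p,\ldots,y^{2p-1}$. So $t=1$, $n_0=p$, and $x$ is filter regular.
\item Frobenius is zero on $H^0_\fm(R)$. On $H^1_\fm(R)\cong H^1_\fm(k[[x,y]]/(y^p))$ one has $\ker F=0^F$. Hence $\mathrm{HSL}(R)=1$.
\item Take $\fq'=(x^{2p})$. Then $y\in(\fq')^F$ because $y^{p^2}=0$. But $x^py\notin\fq'$, so $y\notin\fq':\fm^{n_0}$. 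Likewise $x^p[y/x^{2p}]=[y/x^p]\neq 0$ in $H^1_\fm(R)$.
\end{itemize}
The same failure occurs in the Cohen--Macaulay ring $k[[x,y]]/(y^2)$ with $n_0=1$ and $\fq'=(x^2)$: there $y\in(\fq')^F\setminus(\fq':\fm)$.

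For Step 1 the paper does not need this containment. It uses the exact sequence $0\to(I^{\mathrm{lim}}\cap I^F)/I\to I^F/I\to I^F/(I^{\mathrm{lim}}\cap I^F)\to 0$. It then uses the Frobenius-compatible embedding $I^F/(I^{\mathrm{lim}}\cap I^F)\cong(I^{\mathrm{lim}}+I^F)/I^{\mathrm{lim}}\hookrightarrow 0^F_{H^t_\fm(R)}$ from [GMMQS26, Lemma 3.3], which involves no $\fm^{n_0}$-torsion. Your Step 2 repair also needs a Frobenius-compatible form of the splitting of $(\fq':\fm^{n_0})/\fq'$ in Theorem \ref{splitting2}, including the summand $0:_{H^t_\fm(R)}\fm^{n_0}$. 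Nothing like that is available: Frobenius in general maps $\fq':\fm^{n_0}$ only into $(\fq')^{[p]}:(\fm^{n_0})^{[p]}$.

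\textbf{More seriously, the goal of Step 2 is false, so no argument can reach it.} In the example, $(\fq')^{[p]}=x^{2p^2}R$ is spanned by the monomials $x^ay^b$ with $a\ge 2p^2$ and $b<p$. So $y^p\notin(\fq')^{[p]}$, and $\mathrm{Fte}(\fq')=2>\mathrm{HSL}(R)$, even though $\fq'$ is generated by a filter regular element of $\fm^{2n_0}$.

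This is exactly the $2e$ phenomenon you noticed. The image $[y/x^{2p}]$ of $y$ in $0^F_{H^1_\fm(R)}$ dies after one Frobenius, which puts $y^p$ into $((\fq')^{[p]})^{\mathrm{lim}}$. But the class of $y^p$ in $\bigl(((\fq')^{[p]})^{\mathrm{lim}}\cap((\fq')^{[p]})^F\bigr)/(\fq')^{[p]}\cong 0^F_{H^0_\fm(R)}$ is nonzero and needs a second Frobenius.

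The paper finishes this step by saying that, since $F^{\mathrm{HSL}(R)}$ kills both ends of the exact sequence, $(I^F)^{[p^{\mathrm{HSL}(R)}]}=(I^{\mathrm{lim}}\cap I^F)^{[p^{\mathrm{HSL}(R)}]}=I^{[p^{\mathrm{HSL}(R)}]}$. What actually follows is only $(I^F)^{[p^e]}\subseteq(I^{[p^e]})^{\mathrm{lim}}\cap(I^{[p^e]})^F$. The example refutes the paper's intermediate claim that $\mathrm{Fte}(I)\le\mathrm{HSL}(R)$ whenever $I$ is generated by a filter regular sequence in $\fm^{2n_0}$.

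So your doubt was justified, and neither your proposal nor the paper's argument proves the stated bound. What the shared strategy actually yields is
$\mathrm{Fte}(x_1,\ldots,x_t)\le\lceil\log_p(2n_0)\rceil+\mathrm{HSL}(H^t_\fm(R))+\max_{i<t}\mathrm{HSL}(H^i_\fm(R))\le\lceil\log_p(2n_0)\rceil+2\,\mathrm{HSL}(R)$.
The example does not contradict the theorem itself, since its right-hand side is $3$ there. But obtaining the bound as stated needs an idea beyond this exact-sequence argument.
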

\begin{proof}
Set $I=(x_1,\ldots,x_t)$.\\[0.15cm] 
Consider an exact sequence $$0\to (I^{\mathrm{lim}}\cap I^F)/I\to I^F/I\to I^F/(I^{\mathrm{lim}}\cap I^F) \to 0.$$
  Moreover, $I^F/(I^{\mathrm{lim}}\cap I^F )\cong (I^{\mathrm{lim}}+I^F)/I^{\mathrm{lim}} \hookrightarrow 0^F_{H^t_{\fm}(R)}$ by \cite[Lemma 3.3]{GMMQS26}. 
 If the filter regular sequence $x_1, \ldots, x_t \in  \fm^{2n_0}$, by Lemma \ref{qlimforring} we have
 $$ (I^{\mathrm{lim}}\cap I^F) /I \cong \bigoplus_{i=0}^{t-1} {0 ^F_{H^i_{\mathfrak{m}}(R)}}^{\binom{t}{i}} .$$
 Applying $F^{\mathrm{HSL}(R)}(-)$ to the above equivalences, we see that $F^{\mathrm{HSL}(R)}(0^F_{H^i_{\fm}(R)})=0$ for all $i\ge 0$ and thus $((I^{\mathrm{lim}}\cap I^F)/I)^{[p^{\mathrm{HSL}(R)}]}= (I^F/(I^{\mathrm{lim}}\cap I^F))^{[p^{\mathrm{HSL}(R)}]}=0$. In other words,
 $$I^{[p^{\mathrm{HSL}(R)}]}=(I^{\mathrm{lim}}\cap I^F)^{[p^{\mathrm{HSL}(R)}]} = (I^F)^{[p^{\mathrm{HSL}(R)}]}.$$
 
 Hence, $\mathrm{Fte}(I)\leq \mathrm{HSL}(R)$ for every  $x_1, \ldots, x_t \in  \fm^{2n_0}$. 
Taking any filter regular sequence $x_1, \ldots, x_t$, set $e_0=\lceil  \log_p(2n_0)\rceil$, then $I^{[p^{e_0}]} \subseteq \fm^{2n_0} $ and thus  $\mathrm{Fte}(I)\leq \mathrm{Fte}(I^{[p^{e_0}]})+e_0$.
Combining all the above observations, we complete the proof.   
\end{proof}
Applying Theorem \ref{mainthm} to generalized Cohen--Macaulay local rings, we obtain the following corollary.
\begin{corollary}
 Let $(R,\fm)$ be a generalized Cohen-Macaulay local ring of prime characteristic $p$. Let $n_0$ be a positive integer such that $\fm^{n_0} H^i_{\fm}(R) = 0$ for all $i < \mathrm{dim}(R)$. Then we have
$$\mathrm{Fte}(R) \le \lceil  \log_p(2n_0)\rceil + \mathrm{HSL}(R),$$
where $\lceil x\rceil$ is the smallest integer that is greater than or equal to $x$.     
\end{corollary}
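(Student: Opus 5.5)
The corollary should follow from Theorem \ref{mainthm} once we check that its hypotheses hold and that every parameter ideal is generated by a filter regular sequence of the right length. Set $d=\mathrm{dim}(R)$. Since $R$ is generalized Cohen--Macaulay, $H^i_{\fm}(R)$ has finite length, hence is finitely generated, for every $i<d$. On the other hand $H^d_{\fm}(R)$ is never finitely generated. So the finiteness dimension is $t=f_{\fm}(R)=d>0$, exactly as in the Remark following the definition of finiteness dimension. The integer $n_0$ in the corollary therefore satisfies $\fm^{n_0}H^i_{\fm}(R)=0$ for all $i<t$, which is the hypothesis of Theorem \ref{mainthm}.

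Next, let $\fq$ be any parameter ideal, and write $\fq=(x_1,\ldots,x_d)$ with $x_1,\ldots,x_d$ a system of parameters. We need this system to be a filter regular sequence. The Remark on generalized Cohen--Macaulay modules gives this, but only under the hypothesis that $R$ is a homomorphic image of a Cohen--Macaulay ring. To avoid that hypothesis, I would use assertion (3) of that Remark, which is equivalent to generalized Cohen--Macaulayness with no extra assumption: there is an $n$ with $\fq_{i-1}:x_i\subseteq \fq_{i-1}:\fm^n$ for every system of parameters and every $i$. Since $\fq_{i-1}:\fm^n\subseteq\bigcup_{k}\fq_{i-1}:\fm^k$, this is precisely the defining condition of a filter regular sequence, so every system of parameters is filter regular.

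Applying Theorem \ref{mainthm} to $x_1,\ldots,x_d$ then gives $\mathrm{Fte}(\fq)\le\lceil\log_p(2n_0)\rceil+\mathrm{HSL}(R)$. The right-hand side does not depend on $\fq$, so the same $e$ works for all parameter ideals at once, and the bound on $\mathrm{Fte}(R)$ follows from its definition as the least such $e$. The only step needing care is the passage from an arbitrary system of parameters to a filter regular sequence, and it is settled by using criterion (3) rather than (5).
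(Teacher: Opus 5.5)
Your argument is correct and matches the paper's: the corollary is obtained by applying Theorem \ref{mainthm} with $t=\mathrm{dim}(R)$ to an arbitrary system of parameters. The paper leaves implicit why every system of parameters is filter regular. You settle this properly by using assertion (3) of the Remark rather than (5), since (5) requires $R$ to be a homomorphic image of a Cohen--Macaulay ring.
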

Together with the inequality $\mathrm{HSL}(R) \le \mathrm{Fte}(R)$ proved in \cite{HQ19}, our result shows that the obtained upper bound is essentially sharp.


\end{document}